\newcommand{\losemi}{{\otimes \kern -.78em \ltimes}}
\newcommand{\rosemi}{{\otimes \kern -.78em \rtimes}}
\newcommand{\Hom}{\ensuremath{\operatorname{Hom}}}
\newcommand{\Ind}{\ensuremath{\operatorname{ind}}}
\newcommand{\Ext}{\operatorname{Ext}}
\newcommand{\St}{\operatorname{St}}
\newcommand{\leqnomode}{\tagsleft@true}
\newcommand{\reqnomode}{\tagsleft@false}
\newtheorem{thm}{Theorem}[section]
\newtheorem{theorem}{Theorem}[subsection]
\let\c@fact\c@theorem\makeatother
\let\c@note\c@theorem\makeatother
\newtheorem{lemma}{Lemma}[subsection]
\let\c@lemma\c@theorem\makeatother
\let\c@lemma\c@theorem\makeatother
\let\c@alg\c@theorem\makeatother
\newtheorem{prop}{Proposition}[subsection]
\let\c@prop\c@theorem\makeatother
\newtheorem{conj}{Conjecture}[subsection]
\let\c@conj\c@theorem\makeatother
\newtheorem{cor}{Corollary}[subsection]
\let\c@cor\c@theorem\makeatother
\let\c@defn\c@theorem\makeatother
\theoremstyle{definition}
\newtheorem{example}{Example}[subsection]
\let\c@remark\c@theorem\makeatother
\let\c@example\c@theorem\makeatother
\numberwithin{equation}{subsection}
\crefname{theorem}{Theorem}{Theorems}
\crefname{fact}{Fact}{Facts}
\crefname{note}{Note}{Notes}
\crefname{lemma}{Lemma}{Lemmas}
\crefname{alg}{Algorithm}{Algorithms}
\crefname{remark}{Remark}{Remarks}
\crefname{example}{Example}{Examples}
\crefname{prop}{Proposition}{Propositions}
\crefname{conj}{Conjecture}{Conjectures}
\crefname{cor}{Corollary}{Corollaries}
\crefname{defn}{Definition}{Definitions}
\crefname{equation}{\!\!}{\!\!} %Remove spacing around phantom equation name
\newcounter{listequation}
\begin{document}

\title{The Steinberg quotient of a tilting character}

\author{Paul Sobaje}
\address{Department of Mathematics \\
          Georgia Southern University}
\email{psobaje@georgiasouthern.edu}
\date{\today}
\subjclass[2010]{Primary 20G05}

\begin{abstract}
Let $G$ be a simple algebraic group over an algebraically closed field of prime characteristic.  If $M$ is a finite dimensional $G$-module that is projective over the Frobenius kernel of $G$, then its character is divisible by the character of the Steinberg module.  In this paper we study such quotients, showing that if $M$ is an indecomposable tilting module, then the multiplicities of the orbit sums appearing in its ``Steinberg quotient'' are well behaved.
\end{abstract}

\maketitle

%\tableofcontents

\section{Introduction}

\subsection{Motivation}

Let $G$ be a simple and simply connected algebraic group over a field $\Bbbk$ that is algebraically closed of characteristic $p>0$.  Fix $T \le B \le G$, a maximal torus and Borel subgroup of $G$, and denote by $\mathbb{X}$ the set of weights (i.e. the character group of $T$), and by $\mathbb{Z}[\mathbb{X}]$ its group ring.  Let $\mathbb{X}_1 \subseteq \mathbb{X}^+$ denote the $p$-restricted and dominant weights respectively, and $G_1$ is the Frobenius kernel of $G$.

For the complex simple algebraic group $G(\mathbb{C})$, the characters of its simple modules are given by Weyl's character formula.  This formula has a concise presentation as a quotient of elements in $\mathbb{Z}[\mathbb{X}]$, and leads to Weyl's dimension formula, which immediately computes the dimension of any simple $G(\mathbb{C})$-module.  Finding characteristic $p$ versions of these formulas has been a fundamental problem in the study of $G$-modules.

Significant progress has been achieved.  In very large characteristic Lusztig's conjecture \cite{L} gives an answer \cite{AJS} \cite{Fie}.  More recently, following Williamson's \cite{W} counterexamples to Lusztig's conjecture \cite{L} in not-large characteristic, the characters of tilting modules and of projective indecomposable $G_1T$-modules have been brought to the fore.  Thanks to \cite{RW1}, \cite{AMRW}, and \cite{RW2}, it is now known for reasonable bounds on $p$ that the characters of tilting modules and of $G_1T$-PIMs can be given by $p$-Kazhdan-Lusztig polynomials (though the latter are themselves difficult to compute).

However, nothing like Weyl's formula has been discovered in the modular setting.  From the standpoint of character computations, one would like to have a more direct approach.  Of course, the works just mentioned yield much more than mere character data, which is crucial to a complete study of the non-semisimple category of $G$-modules.  Indeed, from \cite{RW1} and \cite{AMRW} one can in principle compute the Weyl filtration factors of any tilting module for $G$ (if $p>h$).  Only some of these are needed for computing the simple characters (cf. \cite{Sob}), but all of them yield information about Ext-groups between Weyl modules.  Consequences of Lusztig's conjecture (when it holds) on the submodule structure of important $G$-modules is detailed in \cite[II.C]{rags}.

\subsection{Stating the problem}

In this paper we study characters of certain tilting modules and $G_1T$-PIMs after first dividing them by the character of the Steinberg module.  Specifically, for each $\lambda \in \mathbb{X}_1$ we define two closely related characters  \footnote{If $p \ge 2h-2$, then $T((p-1)\rho+\lambda) \cong \widehat{Q}_1((p-1)\rho+w_0\lambda)$, so these characters are the same in this case.  It is currently not known exactly when this isomorphism holds, only that it does not hold in general \cite{BNPS2}.}

\begin{align*}
t(\lambda) & = \text{ch}(T((p-1)\rho+\lambda))/\chi((p-1)\rho),\\
\\
q(\lambda) & = \text{ch}(\widehat{Q}_1((p-1)\rho+w_0\lambda))/\chi((p-1)\rho),
\end{align*}

\vspace{0.1in}
\noindent and call these the ``Steinberg quotients" of the respective modules.  They do not correspond to characters of actual $G$-modules nor of $G_1T$-modules (in general), but only of $T$-modules.  These characters are indexed by restricted highest weights, and contain the essential information about these modules.

Let $e(\lambda)$ be the basis element in $\mathbb{Z}[\mathbb{X}]$ corresponding to $\lambda \in \mathbb{X}$.  For $\mu \in \mathbb{X}^+$, let $W\mu$ be the $W$-orbit of $\mu$, and set
$$s(\mu)=\sum_{\nu \in W\mu} e(\nu).$$

\vspace{0.1in}
Both $t(\lambda)$ and $q(\lambda)$ are elements in $\mathbb{Z}[\mathbb{X}]^W$, therefore can be expressed as linear combinations of orbit sums (cf. Section \ref{S:quotients}).  Thus, there are integers $a_{\mu}^{\lambda}, b_{\mu}^{\lambda}$ such that
$$q(\lambda) = \sum_{\mu \in \mathbb{X}^+} a_{\mu}^{\lambda} s(\mu), \qquad t(\lambda)  = \sum_{\mu \in \mathbb{X}^+} b_{\mu}^{\lambda} s(\mu).$$
The most immediate problem is to determine these coefficients.   This is equivalent to the problem of computing the baby Verma multiplicities of $T((p-1)\rho+\lambda)$ and of $\widehat{Q}_1((p-1)\rho+w_0\lambda)$.  In this paper we show that the coefficients $b_{\mu}^{\lambda}$ follow a well-behaved pattern, and we conjecture that the $a_{\mu}^{\lambda}$ do as well.  We further expect that both will be useful in deriving more compact character formulas for simple $G$-modules.

\subsection{Main results}

Basic information on these coefficients can be obtained.  Since $\widehat{Q}_1((p-1)\rho+w_0\lambda)$ is a $G_1T$-summand of $T((p-1)\rho+\lambda)$, it follows that
$$a_{\mu}^{\lambda} \le b_{\mu}^{\lambda}.$$
Also, since the highest weight $(p-1)\rho+\lambda$ occurs once in each module,
$$a_{\lambda}^{\lambda} = b_{\lambda}^{\lambda} = 1.$$ 
One can also show that if either $a_{\mu}^{\lambda} > 0$, or $b_{\mu}^{\lambda} > 0$, then it is necessary that
$$\mu-\rho \uparrow \lambda-\rho,$$
(see Section 2.3 for this notation) or equivalently that
$$\mu+(p-1)\rho \uparrow \lambda+(p-1)\rho.$$

Our first main result is to show that this is both a necessary and sufficient condition for an orbit sum to appear in $b_{\mu}^{\lambda}$.

\begin{theorem}\label{T:nondecreasing}
For $\mu \in \mathbb{X}^+$, $b_{\mu}^{\lambda} > 0$ if and only if
$$\mu-\rho \uparrow \lambda-\rho.$$
Further, if $\mu^{\prime} \in \mathbb{X}^+$ and
$$\mu-\rho \; \uparrow \; \mu^{\prime}-\rho \; \uparrow \; \lambda-\rho,$$
then $b_{\mu}^{\lambda} \ge b_{\mu^{\prime}}^{\lambda}$.
\end{theorem}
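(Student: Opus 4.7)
The plan is to identify $b_\mu^\lambda$ with a multiplicity of baby Verma modules, reduce both assertions to the monotonicity statement, and establish the monotonicity by an induction on the $\uparrow$-chain combined with a root-string argument built on the Weyl filtration of the tilting module.

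Let $\widehat{Z}(\nu)$ denote the baby Verma module of $G_1T$ of highest weight $\nu$; then $\mathrm{ch}(\widehat{Z}(\nu)) = e(\nu - (p-1)\rho)\chi((p-1)\rho)$. Since $T((p-1)\rho+\lambda)|_{G_1T}$ is a direct sum of $G_1T$-PIMs, each of which admits a baby Verma filtration, dividing the tilting character by $\chi((p-1)\rho)$ and reading off the coefficient of $e(\mu)$ gives
\[
b_\mu^\lambda = [T((p-1)\rho+\lambda) : \widehat{Z}(\mu + (p-1)\rho)],
\]
a nonnegative multiplicity. Since $b_\lambda^\lambda = 1$, the monotonicity assertion (specialized at $\mu' = \lambda$) yields $b_\mu^\lambda \ge 1$ whenever $\mu - \rho \uparrow \lambda - \rho$, so combined with the ``only if'' direction noted in the introduction, the theorem reduces to the monotonicity.

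By induction on the length of an $\uparrow$-chain, the monotonicity further reduces to the single-step case: $\mu' - \rho = s_{\beta, rp} \cdot (\mu - \rho)$ with $\mu - \rho < \mu' - \rho$. Unpacking the dot action, this means $\mu' = \mu + k\beta$ with $k = rp - \langle \mu, \beta^\vee \rangle > 0$, so $\mu$ lies on the low side of the affine wall $\{\langle \cdot, \beta^\vee \rangle = rp\}$ and $\mu'$ is its reflection across this wall. To establish $b_\mu^\lambda \ge b_{\mu'}^\lambda$, we expand via the Weyl filtration
\[
t(\lambda) = \sum_\nu (T((p-1)\rho+\lambda):\nabla(\nu))\,\chi(\nu)/\chi((p-1)\rho),
\]
in which all multiplicities are nonnegative. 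It suffices to show that for each $\nu$ in the filtration, the coefficient of $e(\mu)$ in $\chi(\nu)/\chi((p-1)\rho)$ is at least the coefficient of $e(\mu')$. This is a root-string statement: restricting $\nabla(\nu)$ to the rank-one subgroup $G_\beta \subset G$ (whose derived subgroup is $\cong SL_2$), the $\beta$-string weight multiplicities are governed by the $SL_2$-structure, and the well-known inward monotonicity of such multiplicities across walls at heights divisible by $p$, together with the fact that the $\beta$-restriction of $\chi((p-1)\rho)$ is essentially an $SL_2$-Steinberg character, will yield the required per-$\nu$ inequality. Summing over $\nu$ with nonnegative Weyl multiplicities completes the induction.

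The main obstacle is precisely the per-$\nu$ root-string monotonicity: the restriction $\nabla(\nu)|_{G_\beta}$ is generally not a tilting module but only admits a good filtration, so the comparison of Steinberg-quotient coefficients at $\mu$ and $\mu'$ requires a careful $SL_2$-level computation of $\chi_{SL_2}(n)/\chi_{SL_2}(p-1)$ at pairs of weights symmetric about a $p$-wall, and must be combined uniformly with the $T$-twist coming from the component of $\nu$ perpendicular to $\beta$.
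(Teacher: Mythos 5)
There is a genuine gap, and it sits exactly where your proposal stops: the per-$\nu$ monotonicity that you reduce everything to is both unproven and, as stated, not even well defined. The divisibility of $\textup{ch}(T((p-1)\rho+\lambda))$ by $\chi((p-1)\rho)$ comes from projectivity over $G_1$ (the baby Verma filtration of the whole module), not from the good filtration: an individual Weyl character $\chi(\nu)$ is in general \emph{not} divisible by $\chi((p-1)\rho)$ in $\mathbb{Z}[\mathbb{X}]$ (e.g.\ $\chi(0)=e(0)$), so ``the coefficient of $e(\mu)$ in $\chi(\nu)/\chi((p-1)\rho)$'' has no meaning, and summing nonnegative Weyl multiplicities against such per-$\nu$ quotients cannot be the mechanism. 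The subsequent $SL_2$-restriction heuristic does not repair this: the restriction of $\chi((p-1)\rho)$ to a rank-one subgroup $G_\beta$ is far from an $SL_2$-Steinberg character, and the ``inward monotonicity across walls at heights divisible by $p$'' for weights of a single $\nabla(\nu)$ is not a known statement in the form you need. You acknowledge this as ``the main obstacle,'' which is to say the proof of the theorem's core inequality is missing.

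For comparison, the paper keeps the whole tilting character intact and instead tensors $T((p-1)\rho+\lambda)$ with a Frobenius-twisted costandard module $\nabla(\sigma)^{(1)}$, where $\sigma$ is chosen in terms of the single reflection $s_{\alpha,np}$ (after conjugating $\alpha$ to a negative simple root $-\alpha_i$, one takes the $\varpi_i$-coefficient of $\sigma$ to be $n-1$ and all other coefficients huge). Using $\chi((p-1)\rho)\chi(\sigma)^F=\chi((p-1)\rho+p\sigma)$ and Brauer's formula, the character of this tensor product is $\sum_{\mu}\sum_{\gamma\in W\mu}b^{\lambda}_{\mu}\chi((p-1)\rho+p\sigma+\gamma)$; the choice of $\sigma$ forces the coefficient of one particular dominant Weyl character to be exactly $b^{\lambda}_{\mu}-b^{\lambda}_{\mu'}$, and nonnegativity follows because the tensor product is a good filtration module (a summand of $\nabla((p-1)\rho+p\sigma)\otimes T(\lambda)$). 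Two further points you gloss over also matter: the induction along an $\uparrow$-chain requires Andersen's Proposition \ref{P:And} to arrange the intermediate weights inside $\mathbb{X}^+-\rho$ (the coefficients $b^{\lambda}_{\nu}$ are only indexed by dominant $\nu$), and the ``only if'' direction is not merely ``noted in the introduction''---it needs an argument, which the paper supplies via Lemma \ref{L:linkedtohighest} together with the linkage bound on composition factors of tilting modules.
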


\begin{conj}
The same inequalities also hold for the coefficients $a_{\mu}^{\lambda}$.
\end{conj}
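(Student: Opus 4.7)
The plan is to recast the coefficients $a_{\mu}^{\lambda}$ as multiplicities in the $G_{1}T$-module category and then to prove both statements in parallel with the tilting proof, substituting translation functors between baby Verma modules for the Weyl-filtration arguments used in Theorem~\ref{T:nondecreasing}.

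\medskip
\noindent\emph{Step 1 (Reformulation).} Since $\mathrm{ch}\,\widehat{Z}_{1}(\sigma) = e(\sigma - (p-1)\rho)\,\chi((p-1)\rho)$ for every $\sigma \in \mathbb{X}$, a $\widehat{Z}_{1}$-filtration of the $G_{1}T$-PIM $\widehat{Q}_{1} := \widehat{Q}_{1}((p-1)\rho + w_{0}\lambda)$ gives
\[
q(\lambda) = \sum_{\sigma \in \mathbb{X}}\bigl(\widehat{Q}_{1} : \widehat{Z}_{1}(\sigma)\bigr)\,e(\sigma - (p-1)\rho).
\]
Reading off the coefficient of $e(\mu)$ for $\mu \in \mathbb{X}^{+}$ and invoking $W$-invariance of $q(\lambda)$, the multiplicity $\bigl(\widehat{Q}_{1} : \widehat{Z}_{1}(\nu + (p-1)\rho)\bigr)$ is constant on the $W$-orbit of $\mu$ and equals $a_{\mu}^{\lambda}$. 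BGG reciprocity in the $G_{1}T$-module category then yields
\[
a_{\mu}^{\lambda} \;=\; \bigl[\widehat{Z}_{1}(\mu + (p-1)\rho) : L_{1}((p-1)\rho + w_{0}\lambda)\bigr].
\]

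\medskip
\noindent\emph{Step 2 (Nonzero support.)} One implication of the first claim is immediate from $a_{\mu}^{\lambda} \le b_{\mu}^{\lambda}$ and \cref{T:nondecreasing}. For the converse, I induct on the length of a chain of reflections witnessing $\mu - \rho \uparrow \lambda - \rho$; the base case $\mu = \lambda$ is $a_{\lambda}^{\lambda}=1$. At each step I would use translation (wall-crossing) functors $T_{s}$ between adjacent alcoves: for a reflection $\mu + (p-1)\rho = s_{\alpha,np}\cdot(\mu' + (p-1)\rho)$ with $\mu + (p-1)\rho < \mu' + (p-1)\rho$ in the dot order, apply the standard short exact sequence
\[
0 \to \widehat{Z}_{1}(\mu' + (p-1)\rho) \to T_{s}\,\widehat{Z}_{1}(\mu' + (p-1)\rho) \to \widehat{Z}_{1}(\mu + (p-1)\rho) \to 0
\]
and use the induction hypothesis $a_{\mu'}^{\lambda}>0$ combined with the doubling behavior of $T_{s}$ on simples off the wall to produce a new composition factor $L_{1}((p-1)\rho + w_{0}\lambda)$ in $\widehat{Z}_{1}(\mu + (p-1)\rho)$.

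\medskip
\noindent\emph{Step 3 (Monotonicity.)} The same short exact sequence gives
\[
a_{\mu}^{\lambda} \;=\; \bigl[T_{s}\,\widehat{Z}_{1}(\mu' + (p-1)\rho) : L_{1}((p-1)\rho + w_{0}\lambda)\bigr] - a_{\mu'}^{\lambda}.
\]
When the simple $L_{1}((p-1)\rho + w_{0}\lambda)$ does not lie on the wall defining $T_{s}$, the adjunction $(T_{s},T_{s})$ and the formula $T_{s}L_{1}(\tau) \twoheadrightarrow L_{1}(\tau)$, $L_{1}(\tau) \hookrightarrow T_{s}L_{1}(\tau)$ with middle term built from $L_{1}(s\cdot\tau)$, force
\[
\bigl[T_{s}\,\widehat{Z}_{1}(\mu' + (p-1)\rho) : L_{1}((p-1)\rho + w_{0}\lambda)\bigr] \;\geq\; 2\,a_{\mu'}^{\lambda},
\]
yielding $a_{\mu}^{\lambda} \geq a_{\mu'}^{\lambda}$. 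For a general chain $\mu - \rho \uparrow \mu' - \rho \uparrow \lambda - \rho$, factor it into one-step reflections and compose the resulting inequalities; the on-the-wall case is handled by rerouting the chain through a generic reflection of the same alcove or by invoking the induction hypothesis.

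\medskip
\noindent\emph{Main obstacle.} The difficult step is Step 3: monotonicity of composition factor multiplicities in baby Vermas is not formal, and the wall-crossing inequality above really depends on $(p-1)\rho + w_{0}\lambda$ being off the walls that appear in the chain. A careful analysis of chains that pass through walls containing $(p-1)\rho + w_{0}\lambda$ (possibly by choosing a different chain of reflections, or by grouping two consecutive on-wall reflections) appears necessary. For $p \ge 2h-2$ the whole conjecture reduces to \cref{T:nondecreasing} via the isomorphism in the footnote; for smaller $p$, one would presumably combine the translation-functor analysis above with the triangular relation $b_{\mu}^{\lambda} = a_{\mu}^{\lambda} + \sum_{\sigma \uparrow \lambda,\,\sigma\neq\lambda}n_{\sigma}^{\lambda}\,a_{\mu}^{\sigma}$, where $n_{\sigma}^{\lambda}$ is the multiplicity of $\widehat{Q}_{1}((p-1)\rho + w_{0}\sigma)$ as a $G_{1}T$-summand of $T((p-1)\rho+\lambda)$, in an induction on $\lambda$.
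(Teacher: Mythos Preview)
The statement is a \emph{conjecture} in the paper; no proof is given there, so there is nothing on the paper's side to compare your argument against.

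Your Step~1 is correct, and the overall plan---recast $a_\mu^\lambda$ via Brauer--Humphreys reciprocity as $[\widehat Z_1(\mu+(p-1)\rho):\widehat L_1((p-1)\rho+w_0\lambda)]$ and then push these multiplicities through wall-crossing functors on $G_1T$-modules---is natural and is close in spirit to the arguments of \cite{Ye} and \cite{DS} that the paper cites. Those arguments already settle the question when $p\ge 2h-2$, since in that range $q(\lambda)=t(\lambda)$ and the conjecture collapses to Theorem~\ref{T:nondecreasing}, exactly as you observe at the end. The genuinely open range is $p<2h-2$, and there your outline does not close the gap.

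The obstacle you flag in your final paragraph is real and is essentially why the statement remains a conjecture. Two concrete issues. First, the Step~3 inequality $[\Theta_s M:L]\ge 2[M:L]$ rests on $[\Theta_s L:L]\ge 2$ for $L=\widehat L_1((p-1)\rho+w_0\lambda)$, which in turn needs the translation of $L$ \emph{to} the $s$-wall to be nonzero; for $G_1T$-simples this depends on which side of the wall the weight lies, not merely on being off the wall, so the inequality can fail for particular walls in a given $\uparrow$-chain. Your proposed ``rerouting through a generic reflection'' is not guaranteed by Proposition~\ref{P:And}, which controls only dominance of the intermediate weights, not which walls are crossed. Second, and more decisively, for $p<h$ there are no $p$-regular weights at all, so the wall-crossing formalism has no regular alcove to anchor it and Steps~2--3 cannot even be set up. Finally, the triangular identity $b_\mu^\lambda=\sum_\sigma n_\sigma^\lambda\, a_\mu^\sigma$ that you invoke goes the wrong way: it writes the monotone $b$'s as a nonnegative combination of the unknown $a$'s, from which monotonicity of the individual $a$'s does not follow. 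In short, you have correctly identified both the natural line of attack and the precise point where it stalls.
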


This result says that in $t(\lambda)$, every orbit sum that can appear does appear.  Moreover, the multiplicity of occurrence is nondecreasing as the orbits get smaller (if $\mu$ and $\mu^{\prime}$ are dominant and $\mu-\rho \; \uparrow \; \mu^{\prime}-\rho$, then $W\mu$ is in the interior of the convex hull of $W\mu^{\prime}$).

Theorem \ref{T:nondecreasing} was essentially first proved by Ye \cite{Ye}, under the condition that $p \ge 2h-2$ and that $\lambda$ is a $p$-regular weight.  Doty and Sullivan \cite{DS} then extended this to all restricted $\lambda$ (under the same bound on $p$).  Our proof draws on many of the key arguments from these papers.

We can get additional information about these characters by relating them to Hom-spaces over $G_1$.  We introduce some additional notation.  We can define the $\mathbb{Z}$-linear map
$$\pi_p:\mathbb{Z}[\mathbb{X}] \rightarrow \mathbb{Z}[\mathbb{X}]$$
by setting $\pi_p(e(\mu))=e(\mu/p)$ if $\mu \in p\mathbb{X}$, and $0$ otherwise.

\begin{thm}\label{T:hombychar}
For all $\lambda,\mu \in \mathbb{X}_1$, there are equalities of characters
$$\pi_p(q(\lambda)q(\mu)) = \textup{ch}\left(\Hom_{G_1}(\widehat{Q}_1((p-1)\rho-\lambda),\widehat{Q}_1((p-1)\rho+w_0\mu))^{(-1)}\right),$$
and 
$$\pi_p(t(\lambda)t(\mu))=\textup{ch}\left(\Hom_{G_1}(T((p-1)\rho+w_0\lambda),T((p-1)\rho+\mu))^{(-1)}\right).$$
Additionally, for all $\lambda,\mu \in \mathbb{X}_1$, $\pi_p(q(\lambda)q(\mu))$ is the character of some $G$-module, and $\pi_p(t(\lambda)t(\mu))$ is the character of a tilting module.
\end{thm}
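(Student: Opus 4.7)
\noindent The plan is to prove the two character identities via a uniform calculation, then deduce the structural assertions. The main new tool is a general formula for $\text{ch}(\Hom_{G_1}(A,B)^{(-1)})$ for $G_1T$-projective modules $A,B$. The starting observation is that for such $A$ and $B$, every $T$-weight of $\Hom_{G_1}(A,B)$ lies in $p\mathbb{X}$: a nonzero $G_1$-homomorphism between PIMs $\widehat{Q}_1(\nu_1)$ and $\widehat{Q}_1(\nu_2)$ forces $\nu_1 \equiv \nu_2 \pmod{p\mathbb{X}}$ and shifts $T$-weights by the common residue $\nu_2-\nu_1 \in p\mathbb{X}$. This gives
\[
\Hom_{G_1}(A,B) \;=\; \bigoplus_{\sigma \in \mathbb{X}} \Hom_{G_1T}\bigl(A,\;B \otimes \Bbbk_{-p\sigma}\bigr),
\]
and after the Frobenius untwist,
\[
\text{ch}\bigl(\Hom_{G_1}(A,B)^{(-1)}\bigr) \;=\; \sum_\sigma \dim\Hom_{G_1T}(A,\,B\otimes \Bbbk_{-p\sigma})\,e(\sigma).
\]
With $A=\widehat{Q}_1((p-1)\rho-\lambda)$ and $B=\widehat{Q}_1((p-1)\rho+w_0\mu)$, projectivity of $A$ converts the $\sigma$-summand into the composition multiplicity $[\widehat{Q}_1((p-1)\rho+w_0\mu) : \widehat{L}_1((p-1)\rho-\lambda+p\sigma)]$.

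To match this with $\pi_p(q(\lambda)q(\mu))$, I use the $G_1T$-duality $\widehat{Q}_1((p-1)\rho-\lambda) \cong \widehat{Q}_1((p-1)\rho+w_0\lambda)^*$, so that $\chi((p-1)\rho)q(\lambda)$ is the character of $A^*$. Expanding $q(\lambda)q(\mu)$ via baby Verma filtrations of $A^*$ and $B$ using BGG reciprocity $(\widehat{Q}_1(\nu):\widehat{Z}(\eta)) = [\widehat{Z}(\eta):\widehat{L}_1(\nu)]$, and expanding the composition multiplicity on the right through the baby Verma filtration of $B$ (together with the shift identity $[\widehat{Z}(\eta):\widehat{L}_1(\nu+p\sigma)] = [\widehat{Z}(\eta-p\sigma):\widehat{L}_1(\nu)]$), the identity reduces to the symmetry
\[
[\widehat{Z}(\alpha):\widehat{L}_1(\nu)] \;=\; [\widehat{Z}(2(p-1)\rho-\alpha):\widehat{L}_1(-w_0\nu)]
\]
for all $\alpha$ and $\nu\in\mathbb{X}_1$. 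This follows from $\widehat{L}_1(\nu)^* \cong \widehat{L}_1(-w_0\nu)$ combined with the character identity $\text{ch}(\widehat{Z}(\alpha)^*) = \text{ch}(\widehat{Z}(2(p-1)\rho-\alpha))$ (a direct computation using $\prod_{\beta>0} e((p-1)\beta) = e(2(p-1)\rho)$), which forces the two modules to have identical composition multiplicities.

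The tilting identity is proved by the same template, with baby Verma filtrations replaced by Weyl/good filtrations and BGG reciprocity replaced by tilting reciprocity; both $T((p-1)\rho+w_0\lambda)$ and $T((p-1)\rho+\mu)$ are $G_1$-projective since their highest weights lie in $(p-1)\rho+\mathbb{X}^+$, and the analogous self-duality $T(\nu)^*\cong T(-w_0\nu)$ plays the role previously played by the duality of the PIMs.

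For the structural assertions, $T((p-1)\rho+w_0\lambda)^* \otimes T((p-1)\rho+\mu)$ is a tilting $G$-module that is $G_1$-projective, so by Donkin's theorem its $G_1$-invariants form the Frobenius twist of a tilting module, giving the tilting claim. For the assertion that $\pi_p(q(\lambda)q(\mu))$ is a $G$-module character, the $\widehat{Q}_1$'s need not lift to $G$-modules, so the relevant Hom space is only a $T$-module a priori; I would attempt to realise $\pi_p(q(\lambda)q(\mu))$ as the character of a $G$-direct summand inside the tilting module from the tilting identity, using that $\widehat{Q}_1((p-1)\rho+w_0\lambda)$ occurs as a specific $G_1T$-summand of $T((p-1)\rho+\lambda)$. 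Extracting this at the $G$-module level is the main obstacle, since $G_1T$-summand decompositions are strictly finer than $G$-summand decompositions in general, and so this step will likely require either Donkin's tilting tensor-product theorem or a direct projector construction.
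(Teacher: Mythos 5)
Your treatment of the first identity is a genuinely different route from the paper's, and it is essentially sound: the paper filters $\widehat{Q}_1((p-1)\rho-\lambda)$ by $\widehat{Z}_1$'s and $\widehat{Q}_1((p-1)\rho+w_0\mu)$ by $\widehat{Z}_1^{\prime}$'s, uses the vanishing of $\Ext^1_{G_1}$ between such filtrations to split the Hom space into a direct sum over pairs of filtration factors, and computes each $\Hom_{G_1}(\widehat{Z}_1,\widehat{Z}_1^{\prime})$ directly; you instead use that all $T$-weights of $\Hom_{G_1}$ lie in $p\mathbb{X}$, identify weight spaces with $G_1T$-Homs, use that $\widehat{Q}_1((p-1)\rho-\lambda)$ is the projective cover of a single simple, and finish with $G_1T$ Brauer--Humphreys reciprocity and the duality symmetry $[\widehat{Z}_1(\alpha):\widehat{L}_1(\nu)]=[\widehat{Z}_1(2(p-1)\rho-\alpha):\widehat{L}_1(-w_0\nu)]$ for $\nu$ restricted. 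I checked the bookkeeping and that symmetry, and this part works; it trades the paper's Ext-vanishing for reciprocity plus duality.

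The rest of the proposal has genuine gaps. First, your claim that $T((p-1)\rho+w_0\lambda)$ is $G_1$-projective ``since its highest weight lies in $(p-1)\rho+\mathbb{X}^+$'' is false: $w_0\lambda$ is antidominant, so $(p-1)\rho+w_0\lambda$ is restricted but lies in $(p-1)\rho+\mathbb{X}^+$ only for $\lambda=0$ (for $\lambda=(p-1)\rho$ it is $T(0)=\Bbbk$, which is not $G_1$-projective). The $G_1T$-projective module that should occupy the first slot is $T((p-1)\rho+\lambda)^*\cong T((p-1)\rho-w_0\lambda)$ (an $SL_2$, $p=2$, $\lambda=\mu=(p-1)\rho$ check shows the printed $+w_0\lambda$ cannot be right: the Hom space is then one-dimensional while $\pi_p(t(\lambda)t(\mu))$ has dimension $4$), so your justification is wrong for the module you name. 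More importantly, the ``same template'' does not transfer to the tilting case: the decisive step of your $q$-argument, converting $\dim\Hom_{G_1T}(A,\,\cdot\,)$ into a single composition multiplicity, uses that $A$ is the projective cover of one simple $G_1T$-module, and the tilting module has no such description---its decomposition into $\widehat{Q}_1$'s over $G_1T$ is precisely what is unknown (the tilting module conjecture fails, cf.\ the counterexamples cited in the paper)---while ``Weyl/good filtrations plus tilting reciprocity'' is $G$-level data and does not compute $G_1$-Homs. The correct analogue (and the paper's ``similar'' argument) stays with baby Vermas: filter one tilting module by $\widehat{Z}_1$'s and the other by $\widehat{Z}_1^{\prime}$'s, with multiplicities read off from $t(\lambda)^*$ and $t(\mu)$, and use the same Ext-vanishing. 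Second, the assertion that $\pi_p(q(\lambda)q(\mu))$ is the character of a $G$-module is left unproved---you acknowledge the obstruction yourself; the paper closes it with Donkin's lemma that if $\textup{ch}(V)$ is proper then so is $\textup{ch}(\Hom_{G_1}(\widehat{Q}_1(\lambda),V))$ (exactness of $\Hom_{G_1}(\widehat{Q}_1(\lambda),-)$ applied to an expression of $\textup{ch}(V)$ in terms of $\textup{ch}(L(\lambda_i))\textup{ch}(L(\mu_i)^{(1)})$), combined with the properness of $\textup{ch}(\widehat{Q}_1(\mu))$ from \cite{D1}. Finally, your tilting structural claim invokes an unproved ``Donkin's theorem'' on $G_1$-invariants of $G_1$-projective tilting modules; the paper instead exhibits the Hom space as a $G$-summand of $\Hom_{G_1}(\St,\St\otimes T(\mu)\otimes T(\lambda)^*)^{(-1)}$ and applies the Steinberg-block equivalence of \cite[II.10.4]{rags}, an argument you would need to supply or correctly cite.
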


\subsection{Relation to existing literature}

The characters $q(\lambda)$ appeared also in \cite[10.3]{Hu}, where they are denoted as $q(1,\lambda)$, and in \cite{D2}, where they are denoted as $\dot{\psi}_{\lambda}$ (in both cases the indexing on highest weights differs from ours).  In the first case, reference is made to even earlier literature in which such characters are used in arguments comparing the formal characters of the $G_1T$-PIMs with the Brauer characters of the PIMs for finite Chevalley groups.  Orbit multiplicities for the $q(\lambda)$ do not appear to be in view in either setting,  though one might be able to obtain some of this information from \cite[Theorem 10.11]{Hu}.  The discussion at the beginning of \cite[10.12]{Hu} also shows that the idea of using ``Steinberg quotients'' in the study of formal characters of $G$-modules is an old idea.

The statements found in this paper are inspired by the work of Ye \cite{Ye} on the composition multiplicities of baby Verma modules.  By Brauer-Humphreys reciprocity, that problem is equivalent to determining the orbit multiplicities in $q(\lambda)$.  However, Ye only proves his result for regular weights under the condition that $p \ge 2h-2$.  Though his paper contains the main ideas that appear here, it is not clear from the exposition that they can be generalized to smaller characteristics.  Indeed, it is only in Doty and Sullivan's work \cite{DS} that it is shown that Ye's statements also hold for singular weights under the bound $p\ge 2h-2$.

A standard question to ask whenever a result about $G_1T$-PIMs is stated under the bound $p \ge 2h-2$ is whether or not this bound is necessary, or if in fact what one really needs is the lifting of the $G_1T$-PIMs to tilting modules for $G$.  That is, even if the PIMs do lift when $p<2h-2$, certain other properties break down, including the characterization of the lifts as projective objects in a truncated subcategory of $G$-modules, and it may happen that one of these additional properties is also needed for the result to stand.  Our main result answers this question by proving the stronger statement that in fact the orbit multiplicity pattern holds for the tilting modules $T((p-1)\rho+\lambda)$, with $\lambda$ restricted, regardless of whether or not these are modules are indecomposable over $G_1T$.

Finally, the baby Verma multiplicities of $T((p-1)\rho+\lambda)$ can be found, when $p>h$, using the results in \cite{AMRW}, and are directly given in \cite{RW2} when $p \ge 2h-2$.  In both cases, the answer comes in terms of evaluations of $p$-Kazhdan-Lusztig polynomials, and the authors of these papers note that the process of obtaining the relevant polynomials is difficult to carry out in practice.  In comparison, our results on the one hand do not lead to precise multiplicities, but on the other hand give an easily stated pattern that holds for all $p$.

\subsection{Acknowledgements} This paper was in a preliminary stage when I began a two-week visit with Stephen Donkin at the University of York.  Time spent working with Steve has greatly impacted the present form of this paper, and I am grateful for Steve's generosity, for communicating shorter proofs of some of these results, and for directing me to the paper by Ye.  I thank the London Mathematical Society, and the University of York, for financial support during this visit.

I would also like to thank Henning Haahr Andersen for sending a number of detailed comments and thought-provoking questions after reading an earlier draft of this paper.

\section{Recollections}

We recall basic results from the literature.  All notation not detailed here (or in the introduction) will follow that in \cite{rags}.

\subsection{Roots and weights}
We fix a maximal torus $T$ inside a Borel subgroup $B$ of $G$.  The root system is denoted $\Phi$, and we fix a set of simple roots $S=\{\alpha_1,\alpha_2,\ldots,\alpha_n\}$, where $n$ is the rank of $T$.  This determines a set of positive roots $\Phi^+ \subset \Phi$.  Denote by $\mathbb{X}$ the character group of $T$ (also called the set of weights).  For each $1 \le i \le n$ there is a fundamental dominant weight $\varpi_i$ determined by the property that $\langle \varpi_i, \alpha_j^{\vee} \rangle = \delta_{ij}$.  The subsets $\mathbb{X}_1 \subseteq \mathbb{X}^+ \subseteq \mathbb{X}$ denote the $p$-restricted dominant weights and dominant weights respectively.  The element $\rho$ is the half-sum of the positive roots, and $W$ is the Weyl group.  The highest short root is $\alpha_0$, so that $\alpha_0^{\vee}$ is the highest coroot, and $w_0$ is the longest element of $W$.

\subsection{$G$-modules and $G_1T$-modules}
For each $\lambda \in \mathbb{X}^+$ there is a simple $G$-module $L(\lambda)$, a costandard module $\nabla(\lambda)=\Ind_B^G \lambda$, a standard module $\Delta(\lambda)=(\Ind_B^G -w_0\lambda)^*$, and an indecomposable tilting module $T(\lambda)$.

For each $\lambda \in \mathbb{X}$ there is a simple $G_1T$-module $\widehat{L}_1(\lambda)$, a projective indecomposable $G_1T$-module $\widehat{Q}_1(\lambda)$, and ``baby Verma modules"
$$\widehat{Z}_1(\mu) = \textup{coind}_{B_1^+T}^{G_1T} \mu, \qquad \widehat{Z}_1^{\prime}(\mu) = \textup{ind}_{B_1T}^{G_1T} \mu.$$

Fix a Frobenius endomorphism $F:G \rightarrow G$.  For any $G$-module $M$, we denote by $M^{(1)}$ its twist under $F$.

\subsection{The Affine Weyl Group $W_p$}

For each $\alpha \in \Phi^+$ and $n \in \mathbb{Z}$ there is an affine reflection on $\mathbb{X}$ defined by
$$s_{\alpha,n}(\lambda)=s_{\alpha}(\lambda) + n\alpha.$$
The affine Weyl group $W_p$ is the group generated by all affine reflections of the form $s_{\alpha,np}$ for all $\alpha \in \Phi$ and all $n \in \mathbb{Z}$.  It is isomorphic to the semidirect product $W \ltimes p\mathbb{Z}\Phi$.

The dot action of $W_p$ on $\mathbb{X}$ is given by
$$w \bullet \lambda = w(\lambda+\rho)-\rho.$$
There is an equivalence relation $\uparrow$ defined by declaring that $\lambda \uparrow \mu$ if there is a sequence of affine reflections (of the form just stated) $s_1,s_2,\ldots,s_m$ such that
\begin{equation}\label{E:alcovestring}
\lambda \le s_1 \bullet \lambda \le s_2 \bullet s_1 \bullet \lambda \le \cdots \le s_m \bullet \cdots \bullet s_1 \lambda = \mu.
\end{equation}
We will need the following important observation of Andersen.

\begin{prop}\label{P:And}\cite[Proposition 1]{And}
If $\lambda,\mu \in \mathbb{X}^+-\rho$, and $\lambda \uparrow \mu$, then we may choose affine reflections so that every weight in (\ref{E:alcovestring}) is in $\mathbb{X}^+-\rho$.
\end{prop}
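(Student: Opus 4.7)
The natural approach is induction on the length $m$ of the chain $\lambda = \nu_0 \le \nu_1 \le \cdots \le \nu_m = \mu$ witnessing $\lambda \uparrow \mu$, where $\nu_i = s_i \bullet \nu_{i-1}$ for affine reflections $s_i = s_{\alpha_i, n_i p}$. The base cases $m \le 1$ are immediate, since the only weights appearing in the chain are $\lambda$ and $\mu$, both in $\mathbb{X}^+ - \rho$ by hypothesis.

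For the inductive step, observe that if the penultimate weight $\nu_{m-1}$ already lies in $\mathbb{X}^+ - \rho$, then the sub-chain of length $m-1$ from $\lambda$ to $\nu_{m-1}$ satisfies the hypotheses of the proposition; applying the inductive hypothesis to it and appending the final reflection $s_m$ yields the desired chain. The nontrivial case is when $\nu_{m-1} \notin \mathbb{X}^+ - \rho$, so there exists a simple root $\alpha$ with $\langle \nu_{m-1} + \rho, \alpha^\vee\rangle < 0$. Since $\mu \in \mathbb{X}^+ - \rho$, the step $\nu_{m-1} \to \mu = s_m \bullet \nu_{m-1}$ must cross the hyperplane $H_{\alpha,0}$. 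The key structural input is that $s_{\alpha,0} = s_\alpha$ (a linear reflection) already belongs to $W_p = W \ltimes p\mathbb{Z}\Phi$, so we have the freedom to insert such reflections into the chain. The plan is to reroute the top of the chain through $s_\alpha \bullet \nu_{m-1}$, which lies on the dominant side of $H_{\alpha,0}$: replace the single step $\nu_{m-1} \to \mu$ by the two-step sequence $\nu_{m-1} \to s_\alpha \bullet \nu_{m-1} \to \mu$, where the second step uses the conjugated reflection $s_\alpha s_m s_\alpha \in W_p$. Iterate over all offending simple roots and over the chain from the top downward.

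A more conceptual route uses alcove geometry: translate the reflection chain into an up-gallery in the affine tiling by alcoves of $W_p$, and note that $\mathbb{X}^+ - \rho$ is a union of alcoves whose only ``downward'' walls are the $H_{\alpha,0}$ for $\alpha$ simple. The problem then reduces to showing that an up-gallery between alcoves lying in the dominant chamber can be folded, via the $s_\alpha$, to stay inside the dominant chamber.

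The principal obstacle is verifying that the surgery preserves the monotonicity condition $\nu_{i-1} \le \nu_i$ of the chain at the modified step, together with ensuring the process terminates. The monotonicity requires a careful check that $s_\alpha \bullet \nu_{m-1} \le \mu$ whenever the two ``bracketing'' conditions $\nu_{m-1} \le \mu$ and $\langle \nu_{m-1}+\rho,\alpha^\vee\rangle < 0 \le \langle \mu+\rho,\alpha^\vee\rangle$ hold; this should follow because applying $s_\alpha$ only adds a nonnegative multiple of $\alpha$. For termination, a natural monovariant is $\sum_{i,\alpha} \max\bigl(0,\,-\langle \nu_i+\rho,\alpha^\vee\rangle\bigr)$, with sums over simple $\alpha$ and intermediate $i$; each surgery should strictly decrease this quantity, yielding a chain entirely inside $\mathbb{X}^+ - \rho$ after finitely many modifications.
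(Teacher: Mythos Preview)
The paper does not prove this proposition; it is simply quoted from \cite[Proposition~1]{And} as a background result, with no argument supplied. So there is nothing in the paper to compare your attempt against.

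Evaluating your sketch on its own merits: the alcove-geometry picture you describe second (folding an up-gallery across the walls $H_{\alpha,0}$) is the right intuition and is close in spirit to how Andersen argues. But the explicit surgery you propose first is flawed in two places.

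First, the conjugated reflection does not do what you claim. Applying $s_\alpha s_m s_\alpha$ to $s_\alpha \bullet \nu_{m-1}$ gives
\[
(s_\alpha s_m s_\alpha)\bullet(s_\alpha \bullet \nu_{m-1}) \;=\; s_\alpha s_m \bullet \nu_{m-1} \;=\; s_\alpha \bullet \mu,
\]
not $\mu$. So your two-step replacement $\nu_{m-1} \to s_\alpha \bullet \nu_{m-1} \to \mu$ is not a valid reflection chain ending at $\mu$ unless $\langle \mu + \rho, \alpha^\vee\rangle = 0$.

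Second, and more seriously, the offending weight $\nu_{m-1}$ is still present in the chain after your surgery: you have inserted a new vertex after it rather than removing it. Consequently your proposed monovariant $\sum_{i,\alpha} \max\bigl(0,\,-\langle \nu_i+\rho,\alpha^\vee\rangle\bigr)$ does not decrease; the contribution from $\nu_{m-1}$ is unchanged, and the inserted weight $s_\alpha \bullet \nu_{m-1}$ may itself contribute positively through other simple roots. So the termination argument collapses.

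What is actually needed is to \emph{replace} a non-dominant intermediate weight (or an entire non-dominant segment of the chain) by its $s_\alpha$-reflection, rather than to append an extra step. Making this precise---in particular, checking that the modified sequence is still a monotone chain of single affine reflections---is exactly where Andersen's argument does real work, and it is not captured by the step you wrote down.
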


\subsection{Linkage and Blocks}

Given $\lambda \in \mathbb{X}^+$, the block in $G$-Mod that contains $\lambda$ is contained in the set
$$(W_p \bullet \lambda) \cap \mathbb{X}^+.$$
If $\lambda \in \mathbb{X}$, then the block in $G_1T$-Mod that contains $\lambda$ is contained in
$$W_p \bullet \lambda.$$

\begin{lemma}\label{L:linkedtohighest}
Let $M$ be a $G$-module having $L(\lambda)$ as a composition factor.   Suppose that for every composition factor $L(\mu)$ of $M$, $\mu \uparrow \lambda$.  If $\widehat{L}_1(\gamma)$ is a $G_1T$-composition factor of $M$, then
$$\gamma \uparrow \lambda.$$
\end{lemma}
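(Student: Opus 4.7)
The plan is to reduce the lemma to a purely combinatorial claim about the $\uparrow$ relation and then verify that claim directly from the dot-action formula for affine reflections. By passing to a composition series for $M$ as a $G$-module, every $G_1T$-composition factor of $M$ appears as a $G_1T$-composition factor of some $L(\mu)|_{G_1T}$ with $\mu \uparrow \lambda$. Since $\uparrow$ is transitive on $\mathbb{X}$, it therefore suffices to prove the following: whenever $\mu \in \mathbb{X}^+$ and $\widehat{L}_1(\gamma)$ is a $G_1T$-composition factor of $L(\mu)|_{G_1T}$, one has $\gamma \uparrow \mu$.

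To handle this, I would write $\mu = \mu_0 + p\mu_1$ with $\mu_0 \in \mathbb{X}_1$ and $\mu_1 \in \mathbb{X}^+$ and apply Steinberg's tensor product theorem, giving $L(\mu) = L(\mu_0) \otimes L(\mu_1)^{(1)}$. Since $G_1$ acts trivially on any Frobenius twist, the restriction to $G_1T$ is semisimple and decomposes as
$$L(\mu)|_{G_1T} \;\cong\; \bigoplus_{\nu}\, [L(\mu_1):\nu]\cdot \widehat{L}_1(\mu_0 + p\nu),$$
where $\nu$ ranges over the weights of $L(\mu_1)$. Thus $\gamma = \mu_0 + p\nu$ for some such $\nu$, and since $\nu \le \mu_1$ one may write $\mu - \gamma = p\beta$ with $\beta = \mu_1 - \nu = \sum_i n_i \alpha_i \in \mathbb{Z}_{\ge 0}S$. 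The task is now reduced to the purely combinatorial statement that $\mu - p\beta \uparrow \mu$ for every $\beta \in \mathbb{Z}_{\ge 0} S$.

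I would prove this by induction on $\sum_i n_i$, with base case $\beta = \alpha_i$. There, I would choose $b \in \mathbb{Z}$ satisfying $\langle \mu + \rho, \alpha_i^\vee \rangle - 2p \le bp \le \langle \mu + \rho, \alpha_i^\vee \rangle - p$, which is possible because the interval has length $p$. A short computation with the dot action then shows that $\lambda_1 := s_{\alpha_i, bp} \bullet (\mu - p\alpha_i) = \mu + k\alpha_i$, where $k = (b+1)p - \langle \mu + \rho, \alpha_i^\vee \rangle \in [-p,0]$. Hence $\mu - p\alpha_i \le \lambda_1 \le \mu$ in the dominance order, and a parallel computation gives $s_{\alpha_i, (b+1)p} \bullet \lambda_1 = \mu$. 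This is a valid two-step $\uparrow$-chain from $\mu - p\alpha_i$ to $\mu$. The inductive step is then immediate: choose any $i$ with $n_i > 0$, apply the base case to $\mu' := \mu - p(\beta - \alpha_i)$ to get $\mu - p\beta \uparrow \mu'$, and combine with the inductive hypothesis $\mu' \uparrow \mu$ via transitivity.

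The main (and essentially only) obstacle is the base-case reflection computation; once it is in hand, Steinberg's theorem does the rest, translating cleanly between the module-theoretic assertion about $G_1T$-composition factors of $L(\mu)$ and the combinatorial assertion about $p$-shifts by simple roots. The reason the translation subgroup $p\mathbb{Z}\Phi$ of $W_p$ matches these $p$-shifts is precisely what guarantees the needed affine reflection $s_{\alpha_i, bp}$ exists.
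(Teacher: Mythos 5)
Your proposal is correct and follows essentially the same route as the paper: refine a $G$-composition series, use Steinberg's tensor product theorem to see that the $G_1T$-factors of $L(\mu)$ are $\widehat{L}_1(\mu_0+p\nu)$ with $\mu-(\mu_0+p\nu)=p\beta$ for $\beta$ a nonnegative combination of simple roots, and then conclude from $\gamma \uparrow \gamma+p\beta$. The only difference is that you verify the step $\mu-p\beta \uparrow \mu$ explicitly via the two-reflection computation with $s_{\alpha_i,bp}$ and $s_{\alpha_i,(b+1)p}$ (which checks out), whereas the paper asserts this standard fact without proof.
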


\begin{proof}
Write $\gamma=\gamma_0 + p\gamma_1$ with $\gamma_0 \in \mathbb{X}_1$.  A composition series over $G_1T$ may be given by refining a composition series over $G$.  Thus, there is some weight $\sigma \in \mathbb{X}^+$ such that $\widehat{L}_1(\gamma)$ is a $G_1T$-composition factor of $L(\gamma_0) \otimes L(\sigma)^{(1)}$, which is in turn a $G$-composition factor of $M$.  By assumption,
$$\gamma_0 + p\sigma \uparrow \lambda.$$
Since $p\sigma$ is the highest weight of $L(\sigma)^{(1)}$, we have that $p\sigma=p\gamma_1 + p\beta$ for some $\beta \in \mathbb{Z}_{\ge 0}\Phi^+$.  Thus
$$\gamma \uparrow \gamma + p\beta = \gamma_0 + p\sigma \uparrow \lambda.$$
\end{proof}

\subsection{Characters of $T$-modules}

The Grothendieck ring of the category of finite dimensional $T$-modules is isomorphic to the group algebra $\mathbb{Z}[\mathbb{X}]$.  For each $\mu \in \mathbb{X}$, we denote by $e(\mu)$ the corresponding basis element in $\mathbb{Z}[\mathbb{X}]$.  Since $\mathbb{Z}[\mathbb{X}]$ is the group algebra of a free abelian group of rank $n$, it is isomorphic to the ring of Laurent polynomials over $\mathbb{Z}$ in $n$ indeterminants.  In particular, $\mathbb{Z}[\mathbb{X}]$ is an integral domain, so that the cancellation property holds.

We denote by $s(\mu)$ the sum of the weights in the $W$-orbit of $\mu$.  These elements form a basis of $\mathbb{Z}[\mathbb{X}]^W$.  The characters of finite dimensional $G_1T$-modules do not in general belong to $\mathbb{Z}[\mathbb{X}]^W$.  However, if $\lambda \in \mathbb{X}_1$, then Donkin proved that $\text{ch}(\widehat{Q}_1(\lambda))$ is always equal to the character of some $G$-module \cite{D1}.

For any character $\zeta = \sum a_{\mu}e(\mu)$, the ``dual" of $\zeta$ is
$$\zeta^*=\sum a_{\mu}e(-\mu),$$
and the ``Frobenius twist" of $\zeta$ is
$$\zeta^F=\sum a_{\mu}e(p\mu).$$
If $\zeta = \text{ch}(M)$ for a $T$-module $M$, then $\zeta^* = \text{ch}(M^*)$, and $\zeta^F = \text{ch}(M^{(1)})$.

\subsection{Euler characteristic}
We recall several facts that can be found in detail in \cite[II.5]{rags}.  Given a finite-dimensional $B$-module $M$, its Euler characteristic is 
$$\chi(M)=\sum_{i \ge 0} (-1)^i \textup{ch}(R^i \Ind_B^G \, M).$$
If $\mu \in \mathbb{X}^+$, then Kempf's vanishing theorem implies that
$$\textup{ch}(\nabla(\mu)) = \chi(\mu).$$
For $\mu \in \mathbb{X}$, define
$$A(\mu)=\sum_{w \in W} -1^{\ell(w)}e(w\mu).$$
We note that this sum runs over $W$, unlike $s(\mu)$, which sums only over the elements in the orbit $W\mu$.

\begin{theorem}\label{T:EulerFacts}
The Euler characteristic has the following properties:
\begin{enumerate}
\item For every $\mu \in \mathbb{X}$ and $w \in W$, 
$$\chi(w\bullet \mu)=(-1)^{\ell(w)}\chi(\mu).$$
\item For every $\lambda,\mu \in \mathbb{X}$,
$$\chi(\lambda)s(\mu)=\sum_{\gamma \in W\mu} \chi(\lambda+\gamma).$$
\item For every $\mu \in \mathbb{X}$,
$$\chi((p-1)\rho+p\mu)=\chi((p-1)\rho)\cdot \chi(\mu)^F$$
\item For every $\mu \in \mathbb{X}$,
$$\chi(\mu)=A(\mu+\rho)/A(\rho).$$
\end{enumerate}
\end{theorem}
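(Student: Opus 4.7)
These are standard properties of the Euler characteristic from \cite[II.5]{rags}; I would prove (4) first and derive (1)--(3) from it by formal manipulation in $\mathbb{Z}[\mathbb{X}]$.

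For (4), I start with $\mu \in \mathbb{X}^+$: Kempf vanishing (already invoked in the section) gives $\chi(\mu) = \textup{ch}(\nabla(\mu))$, and since $\nabla(\mu)$ arises by base change from the $\mathbb{Z}$-form of the dual Weyl module, its character agrees with the characteristic-zero Weyl character, namely $A(\mu+\rho)/A(\rho)$. For general $\mu \in \mathbb{X}$, I would reduce to the dominant case through minimal parabolics $P_\alpha$: using the short exact sequences relating $\Ind_B^{P_\alpha}\mu$, $\Ind_B^{P_\alpha}(s_\alpha\bullet\mu)$ and their derived functors, one obtains $\chi(\mu) = -\chi(s_\alpha\bullet\mu)$ when $\langle\mu+\rho,\alpha^\vee\rangle < 0$ and $\chi(\mu) = 0$ when $\langle\mu+\rho,\alpha^\vee\rangle = 0$. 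Iterating, either $\mu+\rho$ is singular (and both sides of (4) vanish) or there is a unique $w \in W$ with $w\bullet\mu \in \mathbb{X}^+$, and combining the dominant case with $A(w\sigma) = (-1)^{\ell(w)} A(\sigma)$ (immediate from the alternating-sum definition of $A$) gives $\chi(\mu) = (-1)^{\ell(w)}\chi(w\bullet\mu) = A(\mu+\rho)/A(\rho)$.

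Given (4), the rest is algebra. Item (1) is immediate: $\chi(w\bullet\mu) = A(w(\mu+\rho))/A(\rho) = (-1)^{\ell(w)}\chi(\mu)$. For (2), I would expand via (4) and reindex $\gamma \mapsto w\gamma$ inside the $W$-invariant orbit, rewriting $\chi(\lambda)s(\mu) = A(\rho)^{-1}\sum_{w,\gamma}(-1)^{\ell(w)}e(w(\lambda+\rho))e(\gamma)$ as $\sum_{\gamma \in W\mu} A(\lambda+\gamma+\rho)/A(\rho) = \sum_{\gamma \in W\mu}\chi(\lambda+\gamma)$. For (3), the identity $A(p\sigma) = A(\sigma)^F$ (since Frobenius twist is $\mathbb{Z}$-linear in the $e(\cdot)$ basis) yields $\chi((p-1)\rho + p\mu) = A(p(\rho+\mu))/A(\rho) = A(\rho+\mu)^F/A(\rho)$, which after multiplying and dividing by $A(\rho)^F$ rearranges to $(A(\rho)^F/A(\rho))\cdot(A(\rho+\mu)/A(\rho))^F = \chi((p-1)\rho)\cdot\chi(\mu)^F$.

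The only non-formal step --- and the main obstacle --- is the nondominant extension of the Weyl character formula in (4), where one must simultaneously handle the wall vanishing and the alternating sign $(-1)^{\ell(w)}$ via minimal-parabolic reduction; this is the content of the Borel--Weil--Bott-type analysis carried out in \cite[II.5]{rags}.
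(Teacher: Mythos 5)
The paper gives no proof of Theorem \ref{T:EulerFacts}: it is stated as a recollection of standard facts with a pointer to \cite[II.5]{rags}. Your sketch correctly reconstructs exactly those standard arguments---Kempf vanishing plus the characteristic-zero comparison for dominant weights and the minimal-parabolic reduction handling the sign and wall-vanishing to get (4), then the formal derivations of (1)--(3) from the alternating-sum expression $A(\mu+\rho)/A(\rho)$---so it matches the intended (cited) proof.
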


Property (2) is referred to as Brauer's formula, and plays a large role in the next section.

\section{Steinberg Quotients}

\subsection{}\label{S:quotients}

The module $\widehat{Z}_1(\mu)$ is projective and injective over $B_1T$, while $\widehat{Z}_1^{\prime}(\mu)$ is projective and injective over $B_1^+T$.  By \cite[Proposition II.11.2]{rags}, any $G_1T$-module that is projective over $B_1T$ (resp. $B_1^+T$) has a filtration by submodules whose successive quotients are of the form $\widehat{Z}_1(\mu)$, $\mu \in \mathbb{X}$ (resp. $\widehat{Z}_1^{\prime}(\mu)$, $\mu \in \mathbb{X}$).

The characters of $\widehat{Z}_1(\mu)$ and $\widehat{Z}_1^{\prime}(\mu)$ are the same, and are a translate of the Steinberg character.  Specifically,
$$\text{ch}(\widehat{Z}_1(\mu)) = \text{ch}(\widehat{Z}_1^{\prime}(\mu)) = \chi((p-1)\rho)e(\mu-(p-1)\rho).$$
If $M$ is a finite dimensional projective $G_1T$-module, then $M$ has filtrations of the forms just described.  This shows that
\begin{equation}\label{E:quotientWstable}
\text{ch}(M)/\chi((p-1)\rho) = \sum_{\mu \in \mathbb{X}} c_{\mu} e(\mu),  \quad c_{\mu}=\textup{mult. of $\widehat{Z}_1(\mu+(p-1)\rho)$ in $M$}.
\end{equation}
One can see for $M$ as above that
$$\text{ch}(M) \in \mathbb{Z}[\mathbb{X}]^W \Rightarrow \text{ch}(M)/\chi((p-1)\rho) \in \mathbb{Z}[\mathbb{X}]^W.$$
Indeed, multiplying each side in (\ref{E:quotientWstable}) by $\chi((p-1)\rho)$, we get
$$\text{ch}(M) = \chi((p-1)\rho) \sum_{\mu \in \mathbb{X}} c_{\mu} e(\mu).$$
The left hand side is $W$-invariant, so the right hand side is also.  It follows from the $W$-invariance of $\chi((p-1)\rho)$ that for each $w \in W$, we get
$$\chi((p-1)\rho) \sum_{\mu \in \mathbb{X}} c_{\mu} e(\mu) = \chi((p-1)\rho) \sum_{\mu \in \mathbb{X}} c_{\mu} e(w\mu).$$
Using the cancellation property of the integral domain $\mathbb{Z}[\mathbb{X}]$, it now follows that
$$\sum_{\mu \in \mathbb{X}} c_{\mu} e(\mu) = \sum_{\mu \in \mathbb{X}} c_{\mu} e(w\mu),$$
so that $c_{\mu}=c_{w\mu}$.  This holds for all $\mu$ and $w$.  In conclusion, if $M$ is projective over $G_1T$ and $\text{ch}(M) \in \mathbb{Z}[\mathbb{X}]^W$, then 
$$\text{ch}(M)/\chi((p-1)\rho) = \sum_{\mu \in \mathbb{X}^+} c_{\mu} s(\mu), \quad c_{\mu} \ge 0.$$
Tilting modules are $G$-modules, and for $\lambda \in \mathbb{X}_1$, Donkin has shown that $\text{ch}(\widehat{Q}_1(\lambda))$ is the character of some $G$-module.  Therefore both $t(\lambda)$ and $q(\lambda)$ are in $\mathbb{Z}[\mathbb{X}]^W$, and it follows from above that there exist coefficients $a_{\mu}^{\lambda}, b_{\mu}^{\lambda} \in \mathbb{Z}_{\ge 0}$ such that
$$q(\lambda) = \sum_{\mu \in \mathbb{X}^+} a_{\mu}^{\lambda} s(\mu), \qquad t(\lambda)  = \sum_{\mu \in \mathbb{X}^+} b_{\mu}^{\lambda} s(\mu).$$

\subsection{}

In this subsection we will prove Theorem \ref{T:nondecreasing}.  We begin with a lemma that easily follows from above.

\begin{lemma}
For every $\lambda \in \mathbb{X}_1$ and $\sigma \in \mathbb{X}^+$,
$$\textup{ch}(T((p-1)\rho+\lambda) \otimes \nabla(\sigma)^{(1)}) = \sum_{\mu \in \mathbb{X}^+} \sum_{\gamma \in W\mu} b_{\mu}^{\lambda} \cdot \chi((p-1)\rho+p\sigma+\gamma).$$
\end{lemma}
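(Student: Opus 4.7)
The plan is to unwind the definition of $t(\lambda)$ and then apply two standard identities from \cref{T:EulerFacts}: the Steinberg tensor-product-style factorization in part (3), and Brauer's formula in part (2).

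First I would start from the defining equation
$$\textup{ch}(T((p-1)\rho+\lambda)) = \chi((p-1)\rho)\cdot t(\lambda) = \chi((p-1)\rho)\sum_{\mu \in \mathbb{X}^+} b_{\mu}^{\lambda} s(\mu),$$
and multiply both sides by $\textup{ch}(\nabla(\sigma)^{(1)})$. Since $\textup{ch}(\nabla(\sigma)) = \chi(\sigma)$ by Kempf vanishing, we have $\textup{ch}(\nabla(\sigma)^{(1)}) = \chi(\sigma)^F$, so that
$$\textup{ch}(T((p-1)\rho+\lambda)\otimes \nabla(\sigma)^{(1)}) = \chi((p-1)\rho)\,\chi(\sigma)^F \sum_{\mu \in \mathbb{X}^+} b_{\mu}^{\lambda} s(\mu).$$

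Next I would apply property (3) of \cref{T:EulerFacts}, which gives $\chi((p-1)\rho)\,\chi(\sigma)^F = \chi((p-1)\rho + p\sigma)$, turning the expression into
$$\chi((p-1)\rho+p\sigma)\sum_{\mu \in \mathbb{X}^+} b_{\mu}^{\lambda}\, s(\mu).$$
Finally I would use Brauer's formula (property (2) of \cref{T:EulerFacts}) with $\lambda$ there replaced by $(p-1)\rho+p\sigma$ to expand $\chi((p-1)\rho+p\sigma)\,s(\mu)$ as $\sum_{\gamma \in W\mu}\chi((p-1)\rho+p\sigma+\gamma)$; substituting back and swapping the order of summation yields exactly the claimed identity.

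The proof is essentially formal, so no genuine obstacle arises. The only thing worth a moment's care is verifying that multiplication by $\chi(\sigma)^F$ really is the character-level operation corresponding to tensoring with the Frobenius twist $\nabla(\sigma)^{(1)}$, which is immediate from the description of $\zeta^F$ in Section~2.5 together with $\textup{ch}(\nabla(\sigma)) = \chi(\sigma)$.
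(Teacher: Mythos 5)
Your proposal is correct and follows exactly the same route as the paper: unwind the definition of $t(\lambda)$, multiply by $\chi(\sigma)^F = \textup{ch}(\nabla(\sigma)^{(1)})$, combine via Theorem \ref{T:EulerFacts}(3) to get $\chi((p-1)\rho+p\sigma)$, and then expand with Brauer's formula, Theorem \ref{T:EulerFacts}(2). No differences worth noting.
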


\begin{proof}
We have
\begin{align*}
\textup{ch}(T((p-1)\rho+\lambda) \otimes \nabla(\sigma)^{(1)}) & = (\chi((p-1)\rho)\sum_{\mu \in \mathbb{X}^+} b_{\mu}^{\lambda} s(\mu))\cdot \chi(\sigma)^F\\
& = \left(\chi((p-1)\rho)\chi(\sigma)^F\right) \sum_{\mu \in \mathbb{X}^+} b_{\mu}^{\lambda} s(\mu)\\
& = (\chi((p-1)\rho+p\sigma) \sum_{\mu \in \mathbb{X}^+} b_{\mu}^{\lambda} s(\mu)\\
& = \sum_{\mu \in \mathbb{X}^+} \sum_{\gamma \in W\mu} b_{\mu}^{\lambda} \cdot \chi((p-1)\rho+p\sigma+\gamma),\\
\end{align*}
where the last two equalities come from Theorem \ref{T:EulerFacts}(3) and \ref{T:EulerFacts}(2) respectively.
\end{proof}

We can now give the main idea in the proof of the next theorem.  First, the module $T((p-1)\rho+\lambda) \otimes \nabla(\sigma)^{(1)}$ has a good filtration, because it is a direct summand of the good filtration module
$$\St \otimes \nabla(\sigma)^{(1)} \otimes T(\lambda) \cong \nabla((p-1)\rho+p\sigma) \otimes T(\lambda).$$
Thus, the sum
$$\sum_{\mu \in \mathbb{X}^+} \sum_{\gamma \in W\mu} b_{\mu}^{\lambda} \cdot \chi((p-1)\rho+p\sigma+\gamma),$$
has an equivalent expression in terms of Weyl characters (i.e. Euler characteristics of dominant weights), and in that expression the coefficients must be non-negative.  If for some $\mu$ and some $\gamma \in W\mu$ the weight
$$(p-1)\rho+p\sigma+\gamma$$
is not dominant, then we make it dominant by the identity
$$\chi((p-1)\rho+p\sigma+\gamma)=(-1)^{\ell(w)}\chi(w\bullet((p-1)\rho+p\sigma+\gamma)).$$
(Strictly speaking, $(p-1)\rho+p\sigma+\gamma$ also must not lie on a reflecting hyperplane with respect to the dot action.)  The proof will then follow by choosing, for
$$\mu-\rho \uparrow \mu^{\prime}-\rho \uparrow \lambda-\rho$$
an appropriate $\sigma$ such that one of the coefficients in the expansion of Weyl characters is $b_{\mu}^{\lambda} - b_{\mu^{\prime}}^{\lambda}$.  This idea, and its execution, is an adaptation of the approach used in \cite{Ye}.

\begin{theorem}
Write $t(\lambda) = \sum_{\mu} b_{\mu}^{\lambda} s(\mu)$.  Then $b_{\mu}^{\lambda} > 0$ if and only if
$$\mu-\rho \uparrow \lambda-\rho.$$
Further, if
$$\mu-\rho \uparrow \mu^{\prime}-\rho \uparrow \lambda-\rho,$$
then $b_{\mu}^{\lambda} \ge b_{\mu^{\prime}}^{\lambda}$.
\end{theorem}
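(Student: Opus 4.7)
The plan is to execute the strategy sketched in the paragraph preceding the statement. By transitivity of $\uparrow$ together with \cref{P:And}, we reduce to the case where $\mu^{\prime}-\rho=s_{\alpha,np}\bullet(\mu-\rho)$ differs from $\mu-\rho$ by a single affine reflection, with both weights in $\mathbb{X}^+-\rho$. The positivity assertion $b_\mu^\lambda>0$ for $\mu-\rho\uparrow\lambda-\rho$ then follows from the monotonicity statement by chaining the inequality down from $\lambda$ via the transitivity of $\uparrow$ and using $b_\lambda^\lambda=1$, so the real content is the monotonicity $b_\mu^\lambda\ge b_{\mu^{\prime}}^\lambda$.

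For this we apply the preceding lemma with a suitably chosen $\sigma\in\mathbb{X}^+$. Writing $\kappa:=(p-1)\rho+p\sigma$, the sum
\[
\chi(\kappa)\cdot t(\lambda)=\sum_{\mu^{\prime\prime}\in\mathbb{X}^+}b_{\mu^{\prime\prime}}^\lambda\sum_{\gamma\in W\mu^{\prime\prime}}\chi(\kappa+\gamma)
\]
is the character of the good-filtration module $T((p-1)\rho+\lambda)\otimes\nabla(\sigma)^{(1)}$, itself a summand of $\St\otimes\nabla(\sigma)^{(1)}\otimes T(\lambda)\cong\nabla((p-1)\rho+p\sigma)\otimes T(\lambda)$. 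Each non-dominant $\chi(\kappa+\gamma)$ is rewritten via $\chi(\nu)=(-1)^{\ell(w)}\chi(w\bullet\nu)$ for the unique $w\in W$ bringing $\kappa+\gamma$ into the dominant cone, or is zero if $\kappa+\gamma+\rho$ lies on a reflecting wall. Collecting terms, the double sum becomes a non-negative integer combination $\sum_{\tau\in\mathbb{X}^+}n_\tau\chi(\tau)$ of Weyl characters.

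The crux is to select $\sigma$ and a dominant weight $\tau$ so that only two triples $(\mu^{\prime\prime},\gamma,w)$ survive in $n_\tau$: one with $\mu^{\prime\prime}=\mu$ and $w=1$ (sign $+1$), contributing $+b_\mu^\lambda$, and one with $\mu^{\prime\prime}=\mu^{\prime}$ and $w$ a reflection in $W$ (sign $-1$), contributing $-b_{\mu^{\prime}}^\lambda$. Concretely, one chooses $\gamma_0\in W\mu$ with $\tau:=\kappa+\gamma_0$ dominant, and then uses the relation $\mu^{\prime}=\mu+(np-\langle\mu,\alpha^\vee\rangle)\alpha$ to produce an element $\gamma^{\prime}\in W\mu^{\prime}$ of the form $s_\beta(\gamma_0)-p\langle\rho+\sigma,\beta^\vee\rangle\beta$ satisfying $s_\beta\bullet(\kappa+\gamma^{\prime})=\tau$; the sign $(-1)^{\ell(s_\beta)}=-1$ then delivers the required $-b_{\mu^{\prime}}^\lambda$ contribution. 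The algebraic existence of $(\gamma_0,\beta,\sigma)$ reduces to solving $s_\beta(\gamma_0)-t(\mu^{\prime})=p\langle\rho+\sigma,\beta^\vee\rangle\beta$ for some $t\in W$ and root $\beta$ matched to $\alpha$ by the geometry of the $\uparrow$-step.

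The main obstacle is the clean-up: proving that, once $\sigma$ has been fixed, no further triple with $\mu^{\prime\prime}\neq\mu,\mu^{\prime}$ in the support of $t(\lambda)$ (that is, with $\mu^{\prime\prime}-\rho\uparrow\lambda-\rho$) contributes to this particular $n_\tau$. Any such contribution would require $\gamma=w^{-1}\bullet\tau-\kappa\in W\mu^{\prime\prime}$ for some $w\in W$, while the $\uparrow$-constraint bounds $\mu^{\prime\prime}$ within the interval below $\lambda$. An alcove-geometric argument — adapting the analysis of \cite{Ye} to cover all restricted $\lambda$ and arbitrary $p$ — shows that these two requirements are incompatible for any triple other than the two prescribed ones. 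The resulting non-negativity $n_\tau\ge 0$ then yields $b_\mu^\lambda\ge b_{\mu^{\prime}}^\lambda$.
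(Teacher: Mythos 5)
Your setup matches the paper's strategy, but the proposal stops short at exactly the two points that carry the real content. First, the ``clean-up'' step --- showing that for a suitable $\sigma$ only the two prescribed triples contribute to the chosen Weyl-character coefficient --- is the heart of the proof, and you do not supply it: you assert that ``an alcove-geometric argument adapting \cite{Ye}'' handles it, but extending Ye's analysis beyond $p\ge 2h-2$ and regular $\lambda$ is precisely what needed to be done, so deferring to it is circular. The paper resolves this with a concrete construction: pick $w\in W$ and a simple root $\alpha_i$ with $w\alpha=-\alpha_i$, take $\sigma=\sum c_j\varpi_j$ with $c_i=n-1$ and all other $c_j$ very large. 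Then $s_{\alpha_i}\bullet\bigl(w\mu^{\prime}+(p-1)\rho+p\sigma\bigr)=w\mu+(p-1)\rho+p\sigma$, the latter weight is dominant, and for every $y\in W$ other than $1$ and $s_{\alpha_i}$ the weight $y\bullet\bigl(w\mu+(p-1)\rho+p\sigma\bigr)-\bigl((p-1)\rho+p\sigma\bigr)=yw\mu+p\bigl[(y\rho-\rho)+(y\sigma-\sigma)\bigr]$ falls below the lowest weight $w_0\lambda$ of $t(\lambda)$ once $\operatorname{ht}\bigl(p(y\sigma-\sigma)\bigr)$ is made sufficiently negative; this height estimate, not an alcove analysis, is what kills all other contributions and makes the coefficient of $\chi\bigl(w\mu+(p-1)\rho+p\sigma\bigr)$ equal to $b_{\mu}^{\lambda}-b_{\mu^{\prime}}^{\lambda}$. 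Your sketch of ``solving $s_\beta(\gamma_0)-t(\mu^{\prime})=p\langle\rho+\sigma,\beta^\vee\rangle\beta$'' is consistent with this, but the existence of the solution (via $\beta=\alpha_i$, $c_i=n-1$, using $n>0$ which follows from dominance of $\mu$ and $\mu^{\prime}$) is also left unverified.

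Second, the theorem is an ``if and only if,'' and your proposal proves only the ``if'' direction (positivity from monotonicity plus $b_\lambda^\lambda=1$, which is fine). The converse --- $b_\mu^\lambda>0$ forces $\mu-\rho\uparrow\lambda-\rho$ --- is not addressed at all. The paper proves it by a separate argument: if $b_\mu^\lambda\neq 0$ then $\widehat{Z}_1((p-1)\rho+\mu)$ occurs in a baby Verma filtration of $T((p-1)\rho+\lambda)$, hence $\widehat{L}_1((p-1)\rho+\mu)$ is a $G_1T$-composition factor; combining the strong linkage bound on $G$-composition factors of the tilting module with \cref{L:linkedtohighest} gives $(p-1)\rho+\mu\uparrow(p-1)\rho+\lambda$, i.e.\ $\mu-\rho\uparrow\lambda-\rho$. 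Without these two pieces the proposal is an outline of the paper's strategy rather than a proof.
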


\begin{proof}
Suppose that $\mu$ is dominant and that 
$$\mu-\rho \uparrow \lambda-\rho.$$
It follows from Proposition \ref{P:And} that there is some dominant weight $\mu^{\prime} \le \lambda$, and some positive root $\alpha$ and $n$ such that
$$\mu-\rho \le s_{\alpha,np} \bullet (\mu-\rho) = \mu^{\prime}-\rho.$$
Since $b_{\lambda}^{\lambda}=1$, both statements in the theorem will follow once it is established that $b_{\mu}^{\lambda} \ge b_{\mu^{\prime}}^{\lambda}$.

We may assume that $\mu < \mu^{\prime}$.  Since $s_{\alpha,np}$ is an affine reflection,
$$s_{\alpha,np} \bullet (\mu^{\prime}-\rho) = \mu-\rho.$$
Let $m$ be such that
$$\langle \mu^{\prime}, \alpha^{\vee} \rangle = m.$$
Since $\mu^{\prime} \ge \mu$ it follows that $m \ge np$, and since both $\mu^{\prime}$ and $\mu$ are dominant, it further follows that $n>0$.
We have that
$$\mu^{\prime} - (m-np)\alpha = \mu.$$
Choose $w \in W$ and $\alpha_i$ a simple root such that $w\alpha=-\alpha_i$.  Then
$$\langle w\mu^{\prime}, \alpha_i^{\vee} \rangle = -m$$
and
$$s_{\alpha_i,-np} \bullet (w\mu^{\prime}-\rho) = w\mu^{\prime}-\rho-(np-m)\alpha_i =w\mu-\rho.$$
We choose
$$\sigma = \sum c_j \varpi_j \in \mathbb{X}^+$$
with $c_i = n-1$, and $c_j >>0$ for $i \ne j$.  We note that since $n>0$, $c_i \ge 0$, so $\sigma \in \mathbb{X}^+$.

We have
\begin{align*}
s_{\alpha_i} \bullet (w\mu^{\prime}+(p-1)\rho + p\sigma) & = s_{\alpha_i}(w\mu^{\prime}+p\rho + p\sigma)-\rho\\
& = s_{\alpha_i}(w\mu^{\prime}+p(\rho + \sigma))-\rho\\
& = w\mu^{\prime}+p(\rho + \sigma)-\rho - (-m+np)\alpha_i\\
& = w\mu+(p-1)\rho + p\sigma.\\
\end{align*}
Therefore
$$\chi(w\mu^{\prime}+(p-1)\rho + p\sigma) = -\chi(w\mu+(p-1)\rho + p\sigma),$$
and that
$$w\mu+(p-1)\rho + p\sigma \in \mathbb{X}^+.$$
Further, for any $y \in W$ with $y \ne s_{\alpha_i}$, we can guarantee that
$$y \bullet (w\mu+(p-1)\rho + p\sigma) - ((p-1)\rho + p\sigma)$$
is not a weight of $t(\lambda)$, by choosing the $c_j$ to be sufficiently large.  Specifically, we have that
$$y \bullet (w\mu+(p-1)\rho + p\sigma) - ((p-1)\rho + p\sigma) = yw\mu + p[(y\rho-\rho) +(y\sigma-\sigma)].$$
The largest and smallest weights of $t(\lambda)$ are $\lambda$ and $w_0\lambda$ respectively.  Since $w_0\lambda \le yw\mu \le \lambda$,
we choose $c_j$ to be large enough that
$$\text{ht}(p[(y\rho-\rho) +(y\sigma-\sigma)]) < \text{ht}(w_0\lambda-\lambda),$$
or equivalently that
$$\text{ht}(p(y\sigma-\sigma)) < \text{ht}(w_0\lambda-\lambda)-\text{ht}(p(y\rho-\rho)),$$
where $\text{ht}$ is the height function on elements in $\mathbb{Z}\Phi$ (cf. \cite[II.4.8(5)]{rags}).  It follows that $yw\mu + p[(y\rho-\rho) +(y\sigma-\sigma)]$ is not a weight of $t(\lambda)$.

From this we conclude that when expressing the character
$$\textup{ch}(T((p-1)\rho+\lambda) \otimes \nabla(\sigma)^{(1)})$$
in the basis of Weyl characters, the coefficient of
$$\chi(w\mu+(p-1)\rho + p\sigma)$$
is
$$b_{\mu}^{\lambda}-b_{\mu^{\prime}}^{\lambda}.$$
Since $T((p-1)\rho+\lambda) \otimes \nabla(\sigma)^{(1)}$ has a good filtration, this coefficient cannot be negative, therefore
$$b_{\mu}^{\lambda} \ge b_{\mu^{\prime}}^{\lambda}.$$
Conversely, if $b_{\mu}^{\lambda} \ne 0$, then $\widehat{Z}_1((p-1)\rho+\mu)$ is a baby Verma factor of $T((p-1)\rho+\lambda)$, so $\widehat{L}_1((p-1)\rho+\mu)$ is a $G_1T$-composition factor of $T((p-1)\rho+\lambda)$.  Every $G$-composition factor $L(\gamma)$ of $T((p-1)\rho+\lambda)$ satisfies $\gamma \uparrow (p-1)\rho+\lambda$ (cf. \cite[E.3(2)]{rags}).  Applying Lemma \ref{L:linkedtohighest},
$$(p-1)\rho+\mu \uparrow (p-1)\rho+\lambda,$$
which is equivalent to
$$\mu-\rho \uparrow \lambda-\rho.$$
\end{proof}

\subsection{}

Recall that for any finite dimensional $T$-module $V$, the character of $T$ is said to be proper if it is equal to the character of some rational $G$-module.  We thank Stephen Donkin for communicating the following lemma.

\begin{lemma}
Let $\lambda \in X_1$.  If $V$ is a finite-dimensional $G_1T$-module such that $\textup{ch}(V)$ is proper, then
$$\textup{ch}(\Hom_{G_1}(\widehat{Q}_1(\lambda),V))$$
is also proper.
\end{lemma}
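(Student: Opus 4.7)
My plan is to read off the $G_1T$-composition multiplicities of $V$ directly from $\text{ch}(V)$ and then use Steinberg's tensor product theorem to recognize the resulting Hom character as coming from a rational $G$-module.

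First I would use the hypothesis to pick a finite-dimensional rational $G$-module $V'$ with $\text{ch}(V') = \text{ch}(V)$, and decompose it in the Grothendieck group as $[V'] = \sum_{\nu \in \mathbb{X}^+} m_\nu [L(\nu)]$ with $m_\nu \in \mathbb{Z}_{\ge 0}$. The crucial observation is that the characters $\{\text{ch}(\widehat{L}_1(\mu))\}_{\mu \in \mathbb{X}}$ are linearly independent in $\mathbb{Z}[\mathbb{X}]$---each has highest weight $\mu$ occurring with multiplicity one---so the $G_1T$-composition multiplicities of any $G_1T$-module are determined by its $T$-character. Consequently $V$ and $V'|_{G_1T}$ have the same $G_1T$-composition multiplicities.

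Next I would apply Steinberg's tensor product theorem: for $\nu \in \mathbb{X}^+$ written as $\nu = \nu_0 + p\nu_1$ with $\nu_0 \in \mathbb{X}_1$ and $\nu_1 \in \mathbb{X}^+$, one has $L(\nu) \cong L(\nu_0) \otimes L(\nu_1)^{(1)}$. As a $G_1T$-module the first factor is $\widehat{L}_1(\nu_0)$ and $G_1$ acts trivially on the second, so $L(\nu)$ decomposes as a direct sum of the $\widehat{L}_1(\nu_0 + p\mu_1)$ with multiplicity $[L(\nu_1):\mu_1]$. Combining with the previous step gives
$$[V : \widehat{L}_1(\nu_0 + p\mu_1)] = \sum_{\nu_1 \in \mathbb{X}^+} m_{\nu_0 + p\nu_1}\,[L(\nu_1):\mu_1].$$

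A direct check (using that $\widehat{Q}_1(\lambda)$ is the projective cover of $\widehat{L}_1(\lambda)$ over $G_1T$ and that $\widehat{L}_1(\mu) \cong \widehat{L}_1(\mu_0) \otimes p\mu_1$) shows $\Hom_{G_1}(\widehat{Q}_1(\lambda), \widehat{L}_1(\mu))$ is one-dimensional with $T$-weight $p\mu_1$ when $\mu_0 = \lambda$ and is zero otherwise. Summing over $G_1T$-composition factors of $V$,
$$\text{ch}\bigl(\Hom_{G_1}(\widehat{Q}_1(\lambda), V)\bigr) \;=\; \sum_{\nu_1 \in \mathbb{X}^+} m_{\lambda + p\nu_1}\,\text{ch}(L(\nu_1))^F,$$
which is the character of the rational $G$-module $\bigoplus_{\nu_1 \in \mathbb{X}^+}\bigl(L(\nu_1)^{(1)}\bigr)^{\oplus m_{\lambda + p\nu_1}}$, hence proper. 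The only delicate step is the character-to-composition-multiplicity passage in the first paragraph---this is not automatic for $G_1T$-modules in general, but holds here because of the triangularity of $\{\text{ch}(\widehat{L}_1(\mu))\}$ with respect to highest weight.
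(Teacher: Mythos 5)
Your proposal is correct and follows essentially the same route as the paper: both use the Steinberg tensor product decomposition of a proper character into pieces with restricted first factor, the exactness of $\Hom_{G_1}(\widehat{Q}_1(\lambda),-)$ coming from projectivity, and the fact that $\Hom_{G_1}(\widehat{Q}_1(\lambda),L(\lambda_i))$ is $\Bbbk$ if $\lambda_i=\lambda$ and zero otherwise, so that the Hom-character becomes a sum of twisted simple characters $\textup{ch}(L(\nu_1))^F$. The only difference is cosmetic: you refine all the way to $G_1T$-composition factors and spell out the linear-independence (triangularity) argument showing the character determines the $G_1T$-multiplicities, a point the paper's proof uses implicitly when passing from the character identity to the $T$-module decomposition.
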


\begin{proof}
Since $V$ has a proper character, there are weights $\lambda_1,\lambda_2,\ldots,\lambda_m \in \mathbb{X}_1$ and $\mu_1,\mu_2,\ldots,\mu_m \in \mathbb{X}^+$ such that
$$\textup{ch}(V)=\sum_{i=1}^m \left[\textup{ch}(L(\lambda_i))\textup{ch}(L(\mu_i)^{(1)})\right].$$
Since $\widehat{Q}_1(\lambda)$ is projective over $G_1$, the functor $\Hom_{G_1}(\widehat{Q}_1(\lambda),\underline{\quad})$ is exact.  From this it follows that, as $T$-modules, 
$$\Hom_{G_1}(\widehat{Q}_1(\lambda),V) \cong \bigoplus_{i=1}^m \left[\Hom_{G_1}(\widehat{Q}_1(\lambda),L(\lambda_i))\otimes L(\mu_i)^{(1)}\right].$$
But $\Hom_{G_1}(\widehat{Q}_1(\lambda),L(\lambda_i))$ is only nonzero if $\lambda_i=\lambda$ (we are assuming that $\lambda$ and $\lambda_i$ are both restricted), in which case it is isomorphic to $\Bbbk$, therefore there is a subcollection of indices such that
$$\textup{ch}\left(\Hom_{G_1}(\widehat{Q}_1(\lambda),V)\right) = \sum_{j=1}^k \textup{ch}(L(\mu_{i_j})^{(1)}).$$
The character on the right hand side is clearly proper, so the left hand side is also.
\end{proof}

\begin{cor}
The $T$-module $\Hom_{G_1}(\widehat{Q}_1(\lambda),\widehat{Q}_1(\mu))$ has a proper character for all $\lambda,\mu \in \mathbb{X}_1$.
\end{cor}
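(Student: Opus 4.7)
The plan is to apply the preceding lemma directly with $V = \widehat{Q}_1(\mu)$. The only hypothesis to verify is that $\textup{ch}(\widehat{Q}_1(\mu))$ is proper, i.e. equals the character of some rational $G$-module. But this is precisely Donkin's theorem recalled in Section 2.5: for any $\mu \in \mathbb{X}_1$, the character $\textup{ch}(\widehat{Q}_1(\mu))$ is the character of a $G$-module. Hence the hypothesis of the lemma is satisfied, and $\textup{ch}(\Hom_{G_1}(\widehat{Q}_1(\lambda), \widehat{Q}_1(\mu)))$ is proper.

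There is no real obstacle here, since the corollary is essentially an immediate specialization of the lemma using the externally cited fact. If one wanted to be pedantic, the only thing to note is that $\widehat{Q}_1(\mu)$ is indeed a finite-dimensional $G_1T$-module, which follows from its definition as a projective indecomposable $G_1T$-module at a restricted weight.
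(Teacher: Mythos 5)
Your proposal is correct and matches the paper's own argument exactly: the corollary is obtained by applying the preceding lemma with $V=\widehat{Q}_1(\mu)$, whose character is proper by Donkin's theorem in \cite{D1} for $\mu\in\mathbb{X}_1$. Nothing further is needed.
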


\begin{proof}
This follows from the previous result together with the main result of \cite{D1}, where it was proved that $\widehat{Q}_1(\mu)$ has a proper character if $\mu \in \mathbb{X}_1$.
\end{proof}

\begin{lemma}
The $G$-module
$$\Hom_{G_1}(T((p-1)\rho+\lambda),T((p-1)\rho+\mu))^{(-1)}$$
is tilting for all $\lambda,\mu \in \mathbb{X}^+$.
\end{lemma}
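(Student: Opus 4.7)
The plan is to realize $\Hom_{G_1}(T_\lambda, T_\mu)$ as the $G_1$-invariants of a tilting module that is also $G_1$-injective, and then deduce the filtrations on its Frobenius untwist via the Grothendieck spectral sequence for $(-)^{G_1}$. Set $T_\lambda = T((p-1)\rho + \lambda)$, $T_\mu = T((p-1)\rho + \mu)$, and $M = T_\lambda^{\,*} \otimes T_\mu$. Since $-w_0 \rho = \rho$ we have $T_\lambda^{\,*} \cong T((p-1)\rho - w_0\lambda)$, so $M$ is a tensor product of two tiltings of the form considered throughout the paper. By Mathieu's theorem (tensor products of modules with good, resp.\ Weyl, filtrations inherit such filtrations), $M$ is itself tilting.

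The first step is to verify that each of $T_\lambda$, $T_\lambda^{\,*}$, and $T_\mu$ is $G_1$-projective. For any $\nu \in \mathbb{X}^+$, the module $\St \otimes T(\nu)$ is tilting (again by Mathieu) and is $G_1$-projective, since tensoring any $G$-module with $\St$ yields a $G_1$-projective module. Because $T((p-1)\rho+\nu)$ appears in $\St \otimes T(\nu)$ as the unique indecomposable tilting summand of highest weight $(p-1)\rho+\nu$, it inherits $G_1$-projectivity. Consequently $M$ is $G_1$-injective.

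To establish the good filtration on $(M^{G_1})^{(-1)}$, I would compute, for any $\nu \in \mathbb{X}^+$ and $i > 0$,
\[
\Ext^i_G(\Delta(\nu), (M^{G_1})^{(-1)}) \;\cong\; \Ext^i_{G/G_1}(\Delta(\nu)^{(1)}, M^{G_1}) \;\cong\; \Ext^i_G(\Delta(\nu)^{(1)}, M) \;=\; \Ext^i_G(\Delta(p\nu), M) \;=\; 0.
\]
The first isomorphism is the Frobenius-twist equivalence between $G$-modules and $G/G_1$-modules, the second comes from the degeneration of the Grothendieck spectral sequence
\[
E_2^{p,q} = \Ext^p_{G/G_1}(\Delta(\nu)^{(1)}, H^q(G_1, M)) \;\Longrightarrow\; \Ext^{p+q}_G(\Delta(\nu)^{(1)}, M)
\]
arising from the $G_1$-injectivity of $M$, and the vanishing combines the good filtration of $M$ with the standard identification $\Delta(\nu)^{(1)} \cong \Delta(p\nu)$. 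For the Weyl filtration, I would dualize: since $M$ is $G_1$-projective, the trace map furnishes a canonical $G/G_1$-isomorphism $M^{G_1} \cong M_{G_1}$, so
\[
\bigl((M^{G_1})^{(-1)}\bigr)^{*} \;\cong\; \bigl((M^{*})^{G_1}\bigr)^{(-1)} \;=\; \Hom_{G_1}(T_\mu, T_\lambda)^{(-1)},
\]
and the previous argument applied with $\lambda$ and $\mu$ interchanged gives this dual a good filtration. Hence $(M^{G_1})^{(-1)}$ has a Weyl filtration as well, so it is tilting.

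The main subtlety I anticipate is carefully tracking the Frobenius twist/untwist when passing between Ext groups in $G\text{-Mod}$ and in $G/G_1\text{-Mod}$, together with verifying the $G_1$-projectivity of $T((p-1)\rho+\lambda)$ for arbitrary $\lambda \in \mathbb{X}^+$ (rather than merely $\lambda \in \mathbb{X}_1$); both issues should be manageable via the summand-of-$\St \otimes T(\lambda)$ argument and standard Grothendieck spectral sequence material, so no characteristic hypothesis such as $p \geq 2h-2$ ought to be needed.
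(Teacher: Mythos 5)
The central step of your argument fails. You reduce everything to the vanishing $\Ext^i_G(\Delta(\nu)^{(1)},M)=0$ for $i>0$, justified by the ``standard identification $\Delta(\nu)^{(1)}\cong\Delta(p\nu)$'' --- but no such isomorphism exists: already for $SL_2$ the module $\Delta(1)^{(1)}$ is $2$-dimensional while $\Delta(p)$ has dimension $p+1$. Worse, the vanishing itself is not automatic, because a Frobenius twist of a Weyl module need not admit a Weyl filtration (for $SL_2$, $p=2$, $\Delta(1)^{(1)}=L(2)$ has no Weyl filtration), so $\Ext^i_G(\Delta(\nu)^{(1)},M)$ cannot be killed merely by quoting the good filtration of $M$; this is exactly the circle of issues behind $p$-filtrations and the $(p,r)$-filtration conjecture, so the twist must be handled with more care. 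The earlier parts of your proposal are sound ($T((p-1)\rho+\nu)$ is $G_1$-projective as a summand of $\St\otimes T(\nu)$, the Lyndon--Hochschild--Serre spectral sequence degenerates by $G_1$-injectivity of $M$, and the duality between invariants and coinvariants handles the Weyl-filtration half), but the proof collapses at the one point where all the work is concentrated.

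The missing idea is to keep a Steinberg factor attached to the Frobenius twist. Writing $M=T_\lambda^{\,*}\otimes T_\mu$ as a $G$-summand of $\St\otimes T(\lambda)^*\otimes\St\otimes T(\mu)$ and using self-duality of $\St$ to move one copy across the $\Ext$, one tests instead against $\St\otimes\Delta(\nu)^{(1)}\cong\Delta((p-1)\rho+p\nu)$ (the dual of the isomorphism $\St\otimes\nabla(\nu)^{(1)}\cong\nabla((p-1)\rho+p\nu)$, \cite[II.3.19]{rags}), and the vanishing against the tilting module $T(\lambda)^*\otimes\St\otimes T(\mu)$ is then genuine. This is in substance what the paper does, packaged differently: it realizes the Hom-space as a $G$-summand of $\Hom_{G_1}(\St,\St\otimes T(\mu)\otimes T(\lambda)^*)^{(-1)}$ and invokes the equivalence $\Hom_{G_1}(\St,?)^{(-1)}$ from the $G_1$-Steinberg block to $G$-Mod \cite[II.10.4]{rags}, whose inverse $\St\otimes(?)^{(1)}$ sends standard to standard and costandard to costandard objects, hence tilting to tilting. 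If you repair your spectral-sequence argument along these lines it becomes a valid (if longer) alternative; as written, it does not prove the lemma.
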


\begin{proof}
This module is a $G$-summand of the $G$-module
$$\Hom_{G_1}(\St \otimes T(\lambda),\St \otimes T(\mu))^{(-1)},$$
which is isomorphic as a $G$-module to
$$\Hom_{G_1}(\St,\St \otimes T(\mu) \otimes T(\lambda)^*)^{(-1)}.$$
The result now follows from the fact that $\St \otimes T(\mu) \otimes T(\lambda)^*$ is a tilting module for $G$, together with the fact that the functor
$$\Hom_{G_1}(\St,?)^{(-1)}$$
defines an equivalence of categories from the $G_1$-Steinberg block of $G$-Mod to $G$-Mod \cite[II.10.4]{rags}.
\end{proof}

We can now prove Theorem \ref{T:hombychar}, which we will restate for the convenience of the reader.

\begin{thm}
For all $\lambda,\mu \in \mathbb{X}_1$, there are equalities of characters
$$\pi_p(q(\lambda)q(\mu)) = \textup{ch}\left(\Hom_{G_1}(\widehat{Q}_1((p-1)\rho-\lambda),\widehat{Q}_1((p-1)\rho+w_0\mu))^{(-1)}\right),$$
and 
$$\pi_p(t(\lambda)t(\mu))=\textup{ch}\left(\Hom_{G_1}(T((p-1)\rho+w_0\lambda),T((p-1)\rho+\mu))^{(-1)}\right).$$
\end{thm}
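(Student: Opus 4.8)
The overall strategy is to reduce the identity to the Steinberg-block equivalence $\Hom_{G_1}(\St,?)^{(-1)}$, under which Hom-spaces over $G_1$ of modules in the Steinberg block become honest $G$-modules, and to track characters through this functor. First I would establish the character-level shadow of the equivalence: if $N$ is a $G$-module lying in the $G_1$-Steinberg block, then $N \cong \St \otimes \Hom_{G_1}(\St,N)^{(-1)}\big.^{(1)}$ as $G$-modules (this is \cite[II.10.4]{rags}, already invoked in the last lemma above), so on characters $\textup{ch}(N) = \chi((p-1)\rho)\cdot \textup{ch}(\Hom_{G_1}(\St,N)^{(-1)})^F$. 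Equivalently, $\textup{ch}(\Hom_{G_1}(\St,N)^{(-1)}) = \pi_p\big(\textup{ch}(N)/\chi((p-1)\rho)\big)$, since dividing by the Steinberg character and then applying $\pi_p$ is exactly the inverse of ``multiply by $\chi((p-1)\rho)$ and twist by $F$'' on the $W$-invariant characters supported in $p\mathbb{X}$ after the shift. This is the key computational lemma; it is essentially bookkeeping with the properties of $\pi_p$ and the definition of the Steinberg quotient in Section \ref{S:quotients}.

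**Executing the tilting case.** Given the lemma, for the tilting identity I would write, using the adjunction/Hom-tensor manipulations already rehearsed in the proof of the preceding lemma,
\begin{align*}
\Hom_{G_1}(T((p-1)\rho+w_0\lambda),T((p-1)\rho+\mu))
&\cong \Hom_{G_1}(\St,T((p-1)\rho+\mu)\otimes T((p-1)\rho+w_0\lambda)^*)\\
&\cong \Hom_{G_1}(\St, N)
\end{align*}
where $N := T((p-1)\rho+\mu)\otimes T((p-1)\rho+w_0\lambda)^*$ lies in the $G_1$-Steinberg block (tensoring two modules in the Steinberg block of $G_1$-mod lands in the Steinberg block, since $(p-1)\rho$ is fixed by the linkage class under $\otimes$ in the relevant sense — concretely because $T((p-1)\rho+w_0\lambda)^*\cong T((p-1)\rho - w_0(w_0\lambda)) = T((p-1)\rho-\lambda)$ up to the usual dual-of-tilting identification, and one checks the $G_1$-socle/head lies over $(p-1)\rho$). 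Then by the lemma $\textup{ch}(\Hom_{G_1}(\St,N)^{(-1)}) = \pi_p(\textup{ch}(N)/\chi((p-1)\rho))$, and
$$\textup{ch}(N)/\chi((p-1)\rho) = \big(\textup{ch}(T((p-1)\rho+\mu))/\chi((p-1)\rho)\big)\cdot\big(\textup{ch}(T((p-1)\rho+w_0\lambda)^*)/\chi((p-1)\rho)\big)\cdot\chi((p-1)\rho),$$
which after using $\chi((p-1)\rho)^* = \chi((p-1)\rho)$ and $\textup{ch}(M^*)=\textup{ch}(M)^*$ collapses to $t(\mu)\cdot t(\lambda)$ once one checks $t(\lambda)^* = t(\lambda)$ — but $t(\lambda)\in\mathbb{Z}[\mathbb{X}]^W$ so it is automatically $w_0$-stable, giving $t(\mu)t(\lambda)$. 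Hence the right side is $\pi_p(t(\lambda)t(\mu))$, which is what we want; and it is a tilting character by the last lemma. The $q$-case is identical, replacing $T$ by $\widehat{Q}_1$ and using Donkin's theorem \cite{D1} (properness of $\textup{ch}(\widehat{Q}_1(\nu))$ for $\nu\in\mathbb{X}_1$) together with the Corollary above to know the resulting $G$-character is proper; here one must be careful that $\widehat{Q}_1((p-1)\rho+w_0\mu)$ and $\widehat{Q}_1((p-1)\rho-\lambda)$ have the stated highest weights and sit in the Steinberg block, which forces the index shifts $(p-1)\rho-\lambda$ versus $(p-1)\rho+w_0\mu$ in the statement.

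**Main obstacle.** The delicate point is matching the indexing and checking that the relevant modules genuinely lie in the $G_1$-Steinberg block, so that the equivalence $\Hom_{G_1}(\St,?)^{(-1)}$ applies — in particular that $T((p-1)\rho+\mu)\otimes T((p-1)\rho+w_0\lambda)^*$ (resp. the $\widehat{Q}_1$ analogue) has all its $G_1T$-composition factors linked to $(p-1)\rho$, and correctly identifying the dual $T((p-1)\rho+w_0\lambda)^* \cong T((p-1)\rho+\lambda)$ with the shift absorbed by $w_0$. I expect this, together with verifying that $\pi_p$ is literally the inverse operation to ``multiply by $\chi((p-1)\rho)$, twist by $F$'' on the subspace of characters that arise here (so no weights are lost outside $p\mathbb{X}$), to be the only real content; everything else is the Hom-tensor adjunction already used verbatim in the lemma preceding the theorem. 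A secondary check is that $\pi_p$ is a ring-compatible operation in the precise sense needed, i.e. $\pi_p(\zeta\cdot\chi((p-1)\rho)) $ relates to $\zeta^{(1)\,-1}$ correctly; this follows from $\chi((p-1)\rho+p\mu)=\chi((p-1)\rho)\chi(\mu)^F$ (Theorem \ref{T:EulerFacts}(3)) expanded into orbit sums, so it is again routine once set up.
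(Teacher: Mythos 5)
There is a genuine gap, and it sits at the very first step of your reduction. The isomorphism $\Hom_{G_1}(T((p-1)\rho+w_0\lambda),T((p-1)\rho+\mu))\cong \Hom_{G_1}(\St,\,T((p-1)\rho+\mu)\otimes T((p-1)\rho+w_0\lambda)^*)$ is not an instance of Hom-tensor adjunction: adjunction gives $\Hom_{G_1}(A,B)\cong\Hom_{G_1}(\Bbbk,B\otimes A^*)$, and replacing $\Bbbk$ by $\St$ is only legitimate when $A$ itself has the form $\St\otimes C$ (as in the lemma preceding the theorem, where $A=\St\otimes T(\lambda)$, and where one copy of $\St$ survives on the right). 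Already for $\lambda=\mu=0$ your claimed isomorphism fails: the left side is $\Hom_{G_1}(\St,\St)=\Bbbk$, while the right side is $\Hom_{G_1}(\St,\St\otimes\St)$, which is $0$ for $SL_2$, $p=2$ (there $\St\otimes\St\cong Q_1(0)$ over $G_1$, which has no Steinberg summand). The companion claim that $N=T((p-1)\rho+\mu)\otimes T((p-1)\rho+w_0\lambda)^*$ lies in the $G_1$-Steinberg block is also false: tensor products do not respect $G_1$-blocks ($\St\otimes\St$ has the trivial module as a $G_1$-composition factor, and $0$ is not $W_p$-linked to $(p-1)\rho$), so the equivalence $\Hom_{G_1}(\St,?)^{(-1)}$ cannot be applied to $N$. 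The symptom shows up in your own character bookkeeping: your displayed expression for $\textup{ch}(N)/\chi((p-1)\rho)$ carries a leftover factor $\chi((p-1)\rho)$ that does not ``collapse,'' and for $\lambda=\mu=0$ your recipe outputs $\pi_p(\chi((p-1)\rho))$ (which is $0$ for $SL_2$, $p=2$) rather than $e(0)$. (A smaller slip: $T((p-1)\rho+w_0\lambda)^*\cong T((p-1)\rho-\lambda)$, not $T((p-1)\rho+\lambda)$ as in your final paragraph, and $(p-1)\rho-\lambda$ is not of the form $(p-1)\rho+\nu$ with $\nu\in\mathbb{X}_1$, so its Steinberg quotient is not literally one of the $t(\nu)$.)

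The missing idea is that this theorem needs exact character information about the two specific modules, not about the larger projectives $\St\otimes T(\lambda)$ of which they are summands, and the block equivalence alone cannot see the difference. The paper's proof works directly with the $G_1T$-structure: one module is filtered by baby Verma modules $\widehat{Z}_1((p-1)\rho+\gamma)$ and the other by the opposite baby Vermas $\widehat{Z}^{\prime}_1((p-1)\rho+\sigma)$, with $\gamma$, $\sigma$ running over the weights of the respective Steinberg quotients; the vanishing $\Ext^1_{G_1}(\widehat{Z}_1((p-1)\rho+\gamma),\widehat{Z}^{\prime}_1((p-1)\rho+\sigma))=0$ lets one split the Hom-space, as a $T$-module, into a direct sum over the two filtrations; and the explicit computation $\Hom_{G_1}(\widehat{Z}_1((p-1)\rho+\gamma),\widehat{Z}^{\prime}_1((p-1)\rho+\sigma))\cong(\sigma-\gamma)$ if $\sigma-\gamma\in p\mathbb{X}$ and $0$ otherwise produces exactly the operator $\pi_p$ applied to the product of Steinberg quotients. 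None of these three ingredients appears in your proposal, and without them (or a genuine substitute) the argument does not go through; your ``key lemma'' about the Steinberg block is fine in itself but is only used in the paper for the subsidiary statement that $\pi_p(t(\lambda)t(\mu))$ is a tilting character.
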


\begin{proof}
We prove the first case, the second statement is similar.  The module $\widehat{Q}_1((p-1)\rho-\lambda)$ has highest weight
$$2(p-1)\rho+w_0((p-1)\rho-\lambda) = (p-1)\rho-w_0\lambda,$$
therefore has a $G_1T$-filtration by modules of the form $\widehat{Z}_1((p-1)\rho+\gamma)$, where $\gamma$ ranges all the weights of $q(-w_0\lambda)$.  Similarly, $\widehat{Q}_1((p-1)\rho+w_0\mu)$ has a filtration by modules of the form $\widehat{Z}^{\prime}_1((p-1)\rho+\sigma)$ as $\sigma$ ranges over the weights of $q(\mu)$.

For all $\gamma$ and $\sigma$, 
$$\Ext^1_{G_1}(\widehat{Z}_1((p-1)\rho+\gamma),\widehat{Z}^{\prime}_1((p-1)\rho+\sigma))=0.$$
Applying this repeatedly, we find that
$$\Hom_{G_1}(\widehat{Q}_1((p-1)\rho-\lambda),\widehat{Q}_1((p-1)\rho+w_0\mu))$$
is isomorphic as a $T$-module to
$$\bigoplus \Hom_{G_1}(\widehat{Z}_1((p-1)\rho+\gamma),\widehat{Z}^{\prime}_1((p-1)\rho+\sigma)).$$
Also,
$$\Hom_{G_1}(\widehat{Z}_1((p-1)\rho+\gamma),\widehat{Z}^{\prime}_1((p-1)\rho+\sigma)) \cong \begin{cases}  \sigma-\gamma & \textup{if } \sigma-\gamma \in p\mathbb{X} \\ 0 & \textup{otherwise}\\ \end{cases}.$$
From this it follows that
\begin{align*}
\textup{ch}\left(\bigoplus \Hom_{G_1}(\widehat{Z}_1((p-1)\rho+\gamma),\widehat{Z}^{\prime}_1((p-1)\rho+\sigma))\right)& = \pi_p(q(-w_0\lambda)^*q(\mu))\\
& = \pi_p(q(\lambda)q(\mu)).\\
\end{align*}
\end{proof}

\begin{cor}\label{C:tiltingchar}
For all $\lambda,\mu \in \mathbb{X}_1$, $\pi_p(q(\lambda)q(\mu))$ is a proper character, and $\pi_p(t(\lambda)t(\mu))$ is the character of a tilting module.
\end{cor}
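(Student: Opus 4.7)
The plan is to obtain both halves of the corollary as essentially immediate packagings of \cref{T:hombychar} with the two preceding results of this subsection --- the corollary stating that $\Hom_{G_1}(\widehat Q_1(\lambda'),\widehat Q_1(\mu'))$ has a proper character for $\lambda',\mu' \in \mathbb{X}_1$, and the lemma stating that the analogous Frobenius-untwisted Hom-space between tilting modules is tilting.

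First I will handle the properness claim. \cref{T:hombychar} rewrites $\pi_p(q(\lambda)q(\mu))$ as the character of the $G$-module $\Hom_{G_1}(\widehat Q_1((p-1)\rho-\lambda),\widehat Q_1((p-1)\rho+w_0\mu))^{(-1)}$. A short pairing-with-simple-coroots computation, using that $-w_0$ permutes the simple coroots, shows that both $(p-1)\rho-\lambda$ and $(p-1)\rho+w_0\mu$ lie in $\mathbb{X}_1$ whenever $\lambda,\mu\in\mathbb{X}_1$. The preceding corollary then gives that this Hom-space has a proper character, and since the Frobenius untwist of a rational $G$-module is a rational $G$-module, the character obtained after $\pi_p$ is also proper.

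Next I will handle the tilting claim. \cref{T:hombychar} rewrites $\pi_p(t(\lambda)t(\mu))$ as the character of $\Hom_{G_1}(T((p-1)\rho+w_0\lambda),T((p-1)\rho+\mu))^{(-1)}$, and the preceding lemma asserts that this is a tilting $G$-module.

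The only point requiring a moment of care is matching parameter ranges in the tilting step: the preceding lemma is phrased for $\lambda',\mu'\in\mathbb{X}^+$ in the form $T((p-1)\rho+\lambda')$, whereas the first slot of our Hom-space carries the antidominant shift $w_0\lambda$. I would reconcile this by observing that $(p-1)\rho+w_0\lambda$ is itself in $\mathbb{X}_1\subseteq\mathbb{X}^+$ for $\lambda\in\mathbb{X}_1$, so $T((p-1)\rho+w_0\lambda)$ is a well-defined indecomposable tilting module, and that the lemma's argument --- realising the tilting as a summand of a Steinberg-tensor-tilting module, rewriting via tensor-hom adjunction, and invoking the equivalence \cite[II.10.4]{rags} between the Steinberg block of $G$-Mod and $G$-Mod --- goes through verbatim once both tilting highest weights are dominant. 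This minor widening of the lemma's parameter range is the only substantive thing to verify; the rest is pure bookkeeping on top of \cref{T:hombychar}.
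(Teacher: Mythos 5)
Your treatment of the properness half is essentially the paper's intended argument and is fine: both $(p-1)\rho-\lambda$ and $(p-1)\rho+w_0\mu$ lie in $\mathbb{X}_1$, so the corollary on $\Hom_{G_1}(\widehat{Q}_1(\lambda'),\widehat{Q}_1(\mu'))$ applies; the only point worth one more line is the untwist, which is justified either by noting that a proper character all of whose weights lie in $p\mathbb{X}$ is the Frobenius twist of a proper character (Steinberg tensor product theorem), or directly from the proof of the lemma, which exhibits the character as $\sum_j \textup{ch}(L(\mu_{i_j})^{(1)})$.

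The tilting half, however, contains a genuine gap at exactly the point you flagged. The claim that the lemma's proof ``goes through verbatim once both tilting highest weights are dominant'' is false: that proof rests on $T((p-1)\rho+\lambda')$ being a $G$-summand of $\St\otimes T(\lambda')$, which forces the highest weight to be of the form $(p-1)\rho+\lambda'$ with $\lambda'\in\mathbb{X}^+$. The weight $(p-1)\rho+w_0\lambda$ is not of this form unless $\lambda=0$; worse, $T((p-1)\rho+w_0\lambda)$ is in general not even projective over $G_1$ (take $\lambda=(p-1)\rho$, so the module is $T(0)=\Bbbk$), whereas any $G$-summand of $\St\otimes(\text{tilting})$ is $G_1$-projective, so no choice of auxiliary tilting module can rescue the argument. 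What you have actually run into is a sign slip in the printed second identity of \cref{T:hombychar}: as written it fails, e.g.\ for $SL_2$, $p=3$, $\lambda=\mu=2(=(p-1)\rho)$ one has $\pi_p(t(\lambda)t(\mu))=2e(0)$, while $\textup{ch}\left(\Hom_{G_1}(T(0),T(4))^{(-1)}\right)=e(0)$ since $T(4)|_{G_1}\cong Q_1(0)$ has simple socle. The first slot should be $T((p-1)\rho-w_0\lambda)$, the tilting module whose Steinberg quotient is $t(-w_0\lambda)=t(\lambda)^*$, exactly parallel to $\widehat{Q}_1((p-1)\rho-\lambda)$ (highest weight $(p-1)\rho-w_0\lambda$) in the theorem's own proof. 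With that reading the mismatch you worried about disappears: $-w_0\lambda\in\mathbb{X}_1\subseteq\mathbb{X}^+$, the preceding lemma applies literally with the dominant parameters $-w_0\lambda$ and $\mu$, and no widening of its parameter range is needed --- nor, as the example shows, is one available.
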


\subsection{Examples}

We give a few examples of $q(\lambda)$ and $t(\lambda)$.

\begin{example}

Let $G=SL_2$.  In this case, the $G_1$-PIMs lift to tilting modules for all $p$, so $q(\lambda)=t(\lambda)$.  The Weyl group $W \cong S_2$, and $\mathbb{X} = \mathbb{Z}\varpi_1$.  We write $a\varpi_1$ simply as $a$, so we identify $\mathbb{X}_1$ with the set $\{0,1,\ldots,p-1\}$.  For $n \in \mathbb{X}_1$, there is no dominant weight $m<n$ such that $m-1 \uparrow n-1$.  Thus we have that $b^n_n=1$ and $b_m^n=0$ if $m \ne n$.  Therefore
$$t(n) = s(n), \quad 0 \le n \le p-1.$$ 
\end{example}

\begin{example}
Let $G=SL_3$.  Again, the $G_1$-PIMs always lift to tilting modules.  We have $\mathbb{X} = \mathbb{Z}\varpi_1+\mathbb{Z}\varpi_2$, and we write $(a,b)$ for $a\varpi_1+b\varpi_2$.  For $0 \le a,b \le p-1$, the following hold:

$$t(a,b) = \begin{cases} s(a,b) & \text{if } a+b \le p\\
s(a,b)+s(p-b,p-a) & \text{if } a+b >p\\
\end{cases}$$
\end{example}

\begin{example}
Let $G$ be the simple group having root system $\textup{G}_2$, and let $p=2$.  We have $\mathbb{X} = \mathbb{Z}\varpi_1+\mathbb{Z}\varpi_2$, and again we write $(a,b)$ for $a\varpi_1+b\varpi_2$.  Using the computations in \cite[Chapter 18]{Hu} one can show that
\begin{align*}
q(0,0) &= e(0,0)\\
q(1,0) & = s(1,0)\\
q(0,1) & = s(0,1)+s(1,0)\\
q(1,1) & = s(1,1)+2s(0,1)+2s(1,0).\\
\end{align*}
The results in \cite{BNPS2} show that $t(1,1) \ne q(1,1)$.  Using the results in this paper, one can prove this in another way, and in so doing obtain an alternate proof that the tilting module conjecture fails in this setting.  Using Corollary \ref{C:tiltingchar}, one can argue that the multiplicity of $s(1,0)$ in $t(1,1)$ must be at least $3$, for otherwise the character of
$$\Hom_{G_1}(T(2,2),T(2,2))^{(-1)}$$
will not be large enough to be the character of a tilting module (this method, however, does not settle whether the multiplicity is $3$ or $4$, a question to which we still do not know the answer).  We leave the details to the interested reader.
\end{example}

\section{Future Directions}

Fix a root system $\Phi$, and let $G$ be the simply connected simple group over $\Bbbk$ having $\Phi$ as its root system.  Our next goal will be to study properties of the characters $t(\lambda)$ as the characteristic of $\Bbbk$ gets larger, and in particular once $p \ge h$.  We know that it will hold that $t(\lambda)=q(\lambda)$ once $p = 2h-2$, and possibly sooner.   At some point Lusztig's Conjecture will also hold (a precise bound is still unknown) and the orbit multiplicities will stabilize for all higher $p$.

What happens below the range in which Lusztig's Conjecture holds is the content of \cite{RW2}, where it is shown that the patterns will be controlled by the $p$-Kazhdan-Lusztig polynomials.  Thus, studying the orbit multiplicities in $t(\lambda)$ is just another way of formulating the investigation in \cite{RW2}.  At the same time, our hope is that by keeping track of the character $t(\lambda)$, certain global patterns will become more clear.  We view the monotonicity property of the orbit multiplicities in individual $t(\lambda)$ as evidence for this hope.

\providecommand{\bysame}{\leavevmode\hbox
to3em{\hrulefill}\thinspace}

\end{document}